\title{Computation of Hecke eigenvalues (mod $p$) via quaternions}
\author{Yiannis Fam}
\date{}
\theoremstyle{plain}
\newtheorem{thm}{Theorem}[section]
\newtheorem{prop}[thm]{Proposition}
\newtheorem{condition}[thm]{Condition}
\newtheorem{lemma}[thm]{Lemma}
\newtheorem{cor}[thm]{Corollary}
\newtheorem{algo}[thm]{Algorithm}
\theoremstyle{definition}
\newtheorem{defn}[thm]{Definition}
\newtheorem{notn}[thm]{Notation}
\newtheorem{rem}[thm]{Remark}
\newtheorem{example}[thm]{Example}
\newcommand{\norm}[1]{\left\lVert#1\right\rVert}
\begin{document}
	\maketitle
	\pagenumbering{arabic}

\begin{abstract}
In a 1987 letter \cite{Ser1}, Serre proves that the systems of Hecke eigenvalues arising from mod $p$ modular forms (of fixed level $\Gamma(N)$ coprime to $p$, and any weight $k$) are the same as those arising from functions $\Omega(N) \to \overline{\mathbb F}_p$, where $\Omega(N)$ is some double quotient of $D^\times (\mathbb A_f)$ and $D$ is the unique quaternion algebra over $\mathbb Q$ ramified at $\{p,\infty\}$. We present an algorithm which then computes these Hecke eigenvalues on the quaternion side in a combinatorial manner.
\end{abstract}

\section{Introduction}

The study of Hecke eigenvalues originated from Ramanujan's $\Delta$-function
$$\Delta(q) = q \prod\limits_{n=1}^\infty (1-q^n)^{24},$$
a weight 12 and level 1 Hecke eigenform about which Ramanujan made a number of conjectures that have motivated much of the theory of modular forms over the past century. More recently, it is known \cite{Del1} that a normalised Hecke eigenform $f=\sum a_n q^n$ in $S_k(\Gamma_0(N);\epsilon)$ gives rise to a mod $p$ Galois representation $\rho: \mathrm{Gal}(\overline{\mathbb Q}/\mathbb Q) \to \mathrm{GL}_2(\overline{\mathbb F}_p)$. This representation is unramified away from $pN$ and is characterised by the following trace and determinant of Frobenius data (mapped appropriately into $\overline{\mathbb F}_p$):
$$\mathrm{tr}(\rho(\mathrm{Frob}_\ell)) = a_\ell , \hspace{1cm} \mathrm{det}(\rho(\mathrm{Frob}_\ell )) = \ell^{k-1}\epsilon(\ell).$$
These representations play a central role in modern number theory, most notably in the conjectures of Serre. As such, the mod $p$ Hecke eigenvalues are objects of great interest, and so one may wish to enumerate these systems of Hecke eigenvalues as a source of examples.

At present, there already exist algorithms for computing these Hecke eigenvalues, for example using modular symbols in connection with the Eichler-Shimura theorem \cite{Wie1}. Our approach will instead make use of the following theorem of Serre (where we have restricted his result to a fixed level $\Gamma(N)$) \cite{Ser1}, that tells us that one could instead compute these Hecke eigenvalues by working with a particular quaternion algebra. The computation of the Hecke eigenvalues then becomes a combinatorial one, which is perhaps a more elementary approach.

\begin{thm}\label{Serre}
Let $D$ be the unique quaternion algebra over $\mathbb Q$ ramified at $\{p,\infty\}$. The systems of Hecke eigenvalues ($a_\ell$) (with $a_\ell \in \overline{\mathbb F}_p$, $\ell \nmid pN$ and fixed $N \geq 3$ coprime to $p$) coming from the modular forms (mod $p$) of level $\Gamma(N)$, are the same as those coming from the functions
$$\left(\Omega(N) := U(N) \backslash D^\times (\mathbb A_f) / D^\times (\mathbb Q)\right) \to \overline{\mathbb F}_p.$$
\end{thm}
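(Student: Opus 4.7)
The plan is to prove Theorem \ref{Serre} in two stages: first reduce the modular-forms side to functions on the supersingular locus of $X(N)_{\overline{\mathbb F}_p}$, and then identify that locus, as a Hecke-set, with $\Omega(N)$ via Deuring's correspondence.

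For the first stage, I would view the mod $p$ modular forms of level $\Gamma(N)$ and weight $k$ as global sections of $\omega^{\otimes k}$ on the special fiber $X(N)_{\overline{\mathbb F}_p}$, and gather them into the graded ring $M = \bigoplus_k M_k(\Gamma(N);\overline{\mathbb F}_p)$, which is a module over the Hecke algebra $\mathbb T$ generated by $T_\ell$ for $\ell \nmid pN$. The Hasse invariant $A$ is a weight $p-1$ form vanishing exactly on the supersingular divisor $S \subset X(N)_{\overline{\mathbb F}_p}$, and multiplication by $A$ is $\mathbb T$-equivariant. The main structural input I would use is the filtration theory of Katz/Jochnowitz/Serre: every system of Hecke eigenvalues occurring in $M$ also occurs in the quotient $M/(A-1)M$, and this quotient is identified, by restriction to $S$ and rescaling by a power of $A$, with the space of functions $\mathrm{Fun}(S_N,\overline{\mathbb F}_p)$, where $S_N$ is the set of supersingular points of $X(N)_{\overline{\mathbb F}_p}$. (One direction follows from restriction; the other uses that any eigenform can be twisted by a power of $A$ into weight divisible by $p-1$ and that the sheaves $\omega^{\otimes k}$ on $X(N)_{\overline{\mathbb F}_p}$ become trivialisable on a neighborhood of $S_N$.) One must check that the geometric Hecke correspondences on $X(N)$, which send a pair $(E,\alpha)$ to the formal sum over $\ell$-isogenies, preserve supersingularity and so act on $\mathrm{Fun}(S_N,\overline{\mathbb F}_p)$ compatibly with $\mathbb T$.

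For the second stage, I would use Deuring's theorem: the supersingular elliptic curves over $\overline{\mathbb F}_p$ form a finite set whose endomorphism rings are maximal orders in $D$, and the groupoid of supersingular $E/\overline{\mathbb F}_p$ with a $\Gamma(N)$-level structure $\alpha:(\mathbb Z/N)^2 \xrightarrow{\sim} E[N]$ is equivalent to the adelic double coset groupoid of $D^\times$. Concretely, fixing one supersingular $E_0$ with $\mathcal O := \mathrm{End}(E_0) \subset D$ and a base level structure $\alpha_0$, the map that sends $(E,\alpha)$ to the isomorphism class of $\mathrm{Hom}(E,E_0)$ as a right $\mathcal O$-module together with the adelic element comparing $\alpha$ with $\alpha_0$ under the induced isogeny, yields a bijection $S_N \xrightarrow{\sim} U(N)\backslash D^\times(\mathbb A_f)/D^\times(\mathbb Q)$ for the appropriate compact-open $U(N) \subset D^\times(\mathbb A_f)$ (the principal level-$N$ subgroup of a maximal order at finite places away from $p$, and the unit group of the maximal order at $p$). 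The Hecke operator $T_\ell$ on $S_N$, defined via $\ell$-isogenies, corresponds under this bijection to the double coset operator on $D^\times(\mathbb A_f)$ attached to $U(N)\,\varpi_\ell\,U(N)$ in the standard way, so the $\mathbb T$-module structures agree.

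Concatenating the two stages gives an isomorphism of $\mathbb T$-modules (up to matching up Hecke eigenvalue systems, which is all that is claimed)
\[
M/(A-1)M \;\cong\; \mathrm{Fun}(S_N,\overline{\mathbb F}_p) \;\cong\; \mathrm{Fun}(\Omega(N),\overline{\mathbb F}_p),
\]
and combined with the filtration fact that systems occurring in $M$ are exactly those occurring in $M/(A-1)M$, this is precisely the statement of the theorem.

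I expect the hard part to be the filtration step: making precise and rigorous the claim that every mod $p$ Hecke eigensystem of level $\Gamma(N)$ and some weight $k$ is detected by its restriction to the supersingular locus. This requires comparing weights via the Hasse invariant, controlling the kernel of restriction $M_k \to \mathrm{Fun}(S_N,\overline{\mathbb F}_p)$ (which consists of forms divisible by $A$), and invoking that Hecke eigensystems persist under multiplication by $A$ so that passing to the $A$-inverted or $(A-1)$-quotient module loses no eigensystem. The geometric identification with $\Omega(N)$, while requiring careful bookkeeping of level structures at $p$ versus away from $p$, is essentially the classical adelic description of supersingular isogeny classes and should be routine once the right base objects are chosen.
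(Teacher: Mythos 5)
The paper does not prove Theorem \ref{Serre}; it is quoted from Serre's 1987 letter \cite{Ser1} (``we have restricted his result to a fixed level $\Gamma(N)$''), so there is no in-paper argument to compare against. Your two-stage outline does follow Serre's strategy in broad strokes: pass from mod $p$ modular forms to the supersingular locus via Hasse-invariant filtration, then invoke Deuring's correspondence to identify that locus with a double coset set of $D^\times(\mathbb A_f)$.

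There is, however, a concrete gap in stage 1 that propagates into stage 2 and would make your final bijection miss most weights. You identify $M/(A-1)M$ with plain $\overline{\mathbb F}_p$-valued functions on $S_N$ ``by restriction to $S$ and rescaling by a power of $A$,'' and you take $U(N)$ to have the \emph{full} unit group $\mathcal O_p^\times$ at the place $p$. This only captures eigensystems of weight $\equiv 0 \bmod p^2-1$. The rescaling step is the issue: $A$ changes weight by $p-1$, but $\omega^{\otimes k}|_{S_N}$ is only canonically trivial when $p^2-1 \mid k$ (the automorphism group of a supersingular curve maps to $\mathbb F_{p^2}^\times \subset \overline{\mathbb F}_p^\times$, so $\omega|_{S_N}$ has order $p^2-1$, not $p-1$); moreover one cannot literally ``rescale by $A$'' on $S_N$ since $A$ vanishes identically there. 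What Serre actually uses is not $\mathrm{Fun}(S_N,\overline{\mathbb F}_p)$ but $\overline{\mathbb F}_p$-valued functions on the $\mathbb F_{p^2}^\times$-torsor of trivialisations of $\omega$ over $S_N$, graded by the weight-$k$ transformation law $f(\mu \cdot x)=\mu^{-k}f(x)$. This is exactly why the paper's $U(N)$ has $\mathcal O_p^\times(1)$, not $\mathcal O_p^\times$, at the $p$-place: the residual $\mathbb F_{p^2}^\times \cong \mathcal O_p^\times/\mathcal O_p^\times(1)$-action on $\Omega(N)$ is what carries the weight. With $U(N)$ corrected and stage 1 reformulated in terms of this torsor (so the target is $\bigoplus_{k \bmod p^2-1}$ of weight-$k$ function spaces rather than bare $\mathrm{Fun}(S_N,\overline{\mathbb F}_p)$), your two-stage plan is sound and agrees with Serre's argument.
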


We remark that in this theorem, we do not necessarily realise all the weight $k$ systems of eigenvalues from the modular form side as weight $(k \mod p^2-1)$ systems of eigenvalues on the quaternion side, only those arising from weight $k$ eigenforms not divisible by the Hasse invariant. They will still appear on the quaternion side when ranging over all weights.

\begin{notn}
We need to explain some of this notation:
\begin{itemize}
\item Let $\mathcal O$ be any maximal order of $D$.
\item Let $D_\ell = D \otimes \mathbb Q_\ell $ and $\mathcal O_\ell  = \mathcal O \otimes \mathbb Z_\ell $ a maximal order, for any prime $\ell$. So $\mathcal O_\ell  \cong \mathrm{M}_2(\mathbb Z_\ell )$ for $\ell\neq p$.
\item Let $D^\times(\mathbb A_f)$ denote the finite part of the adelic points of $D^\times$, in other words the restricted product ${\prod\limits_{\ell}} ' D^\times_\ell $ with respect to the subgroups $\mathcal O_\ell ^\times$.
\item Let $\pi \in \mathcal O$ be a uniformiser of $\mathcal O_p$.
\item Let $\mathcal O_p^\times(1)$ be the kernel of reduction mod $\pi$, $\mathcal O_p^\times \to \mathbb F_{p^2}^\times$.
\item For $\ell \neq p$, let $\mathcal O_\ell ^\times(N)$ be the subgroup of $\mathcal O_\ell ^\times \cong \mathrm{GL}_2(\mathbb Z_\ell )$ consisting of elements congruent to 1 mod $\ell^{v_\ell(N)}$, where $\ell^{v_\ell(n)}$ is the highest power of $\ell$ dividing $N$.
\item Let $U(N) = \mathcal O_p^\times(1) \times \prod\limits_{\ell\neq p} \mathcal O_\ell ^\times(N)$, an open subgroup of $D^\times (\mathbb A_f)$.
\item For $(x_\ell) \in D^{\times}(\mathbb A_f)$, we will denote by $[x_\ell]$ the image in $\Omega(N)$.
\item By a {\it weight $k$} function $f: \Omega(N) \to \overline{\mathbb F}_p$ we mean a function which satisfies $f(\mu \cdot [x_\ell ]) = \mu^{-k}f([x_\ell ])$, where $\mu \in \mathcal O_p^\times/ \mathcal O_p^\times(1) \cong \mathbb F_{p^2}^\times$ acts on $[x_\ell ]$ by multiplication in the $p$-place.
\end{itemize}
\end{notn}

Note that elements of $\Omega(N)$ correspond to isomorphism classes of invertible left $\mathcal O$-ideals $I$ with $\pi N$-structure, meaning a basis for $I/\pi N I$ as an $\mathcal O / \pi N \mathcal O$-module. Explicitly, an adelic point $(x_\ell )$ corresponds to an ideal $I$ with $I_\ell  = \mathcal O_\ell x_\ell $ for all $\ell$. The $\pi N$-structure is then given by the reduction modulo $\pi N I$ of any element $x \in I$ satisfying the congruences $$x \equiv x_\ell \mod \pi N I_\ell$$ for all $\ell$ (this congruence is vacuous for any $\ell \nmid pN$, so such $x$ exists by the Chinese remainder theorem). We then quotient $D^\times(\mathbb A_f)$ by $U(N)$ and $D^\times (\mathbb Q)$ exactly to get the desired bijection.

For a prime $\ell_0 \nmid pN$, the Hecke operator $T_{\ell_0  }$ on this space of functions $\Omega(N) \to \overline{\mathbb F}_p$ is given by $$T_{\ell_0  } f([x_\ell ]) = \ell_0  ^{-1}\sum\limits_i f(g_i \cdot [x_{\ell }])$$
for $\mathrm{GL}_2(\mathbb Z_{\ell_0  }) \begin{psmallmatrix} 1 & 0 \\ 0 & \ell_0   \end{psmallmatrix} \mathrm{GL}_2(\mathbb Z_{\ell_0  }) = \bigsqcup \mathrm{GL}_2(\mathbb Z_{\ell_0  })g_i$. Here $g_i \cdot [x_{\ell_0}]$ means we pick a representative $(x_\ell) \in D^\times (\mathbb A_f)$ of $[x_\ell]$, multiply this in the $\ell_0$-place by the matrix $g_i$ (under an identification of $\mathcal O_{\ell_0} \cong \mathrm{M}_2(\mathbb Z_{\ell_0})$ and hence of $D_{\ell_0} \cong \mathrm{M}_2(\mathbb Q_{\ell_0})$), and then take the image in $\Omega(N)$. Each individual $g_i \cdot [x_\ell]$ is not well defined in $[x_\ell]$, but $T_{l_0}$ is well defined, provided we pick the same representative $(x_\ell) \in D^\times (\mathbb A_f)$ for each multiplication. This Hecke module structure has been studied before, as in the likes of \cite{MR1846458}.

The algorithm we present computes a matrix for the Hecke operator $T_{\ell_0}$ on the space of (weight $k$) functions $\Omega(N) \to \overline{\mathbb F}_p$, where $\ell_0, p$ and $N$ are pairwise coprime. Note that we allow the cases $N=1,2$ as the definitions on the quaternion side still make sense here. We begin by computing an illustrative example for the case $p=11$, and then generalise this to weight $k=0$ and level $N=1$ for any $p$. Note that in this case the matrices we compute are exactly Brandt matrices. This is then extended to higher weight and level, essentially by keeping track of the $\pi N$-structure. 

We remark that a similar computation has been performed by Pizer in \cite{Piz1}. This has since been applied in, for example, \cite{MR2175121} and \cite{MR1863287}. Pizer was interested in computing the subspace of cusp forms on $\Gamma_0(N)$ generated by theta series, and the Hecke operators on this subspace. The algorithm involves computing certain Brandt matrix series. The main difference between our approach and Pizer's is that we incorporate level $N$ structure through coset representatives of $\mathcal O_\ell^\times (N) \backslash \mathcal O_\ell^\times \cong \mathrm{GL}_2(\mathbb Z / \ell^{v_\ell(N)} \mathbb Z)$. See also \cite{MR2193808} and \cite{MR2291849} for a similar approach in the context of Hilbert modular forms (the corresponding description of Hilbert modular forms on the quaternion side is more involved than for modular forms). Pizer instead works with orders of level $N$, which allows them to work purely with the quaternion algebras, without having to write down explicit isomorphisms of the form $\mathcal O_\ell \cong \mathrm{M}_2(\mathbb Z_\ell)$, something that we must do (see Section 3). On the other hand, the use of matrices perhaps makes the contribution from the level structure more visible. The tradeoff is then between computing these isomorphisms with the Hensel-like argument involved, and computing orders of level $N$. A related point of difference is that in Pizer's argument one must compute the left ideal classes of an order of level $N$, whereas we only need to compute these for our maximal order $\mathcal O$, because our level structure is already captured in $\mathcal O_\ell^\times (N) \backslash \mathcal O_\ell^\times \cong \mathrm{GL}_2(\mathbb Z / \ell^{v_\ell(N)} \mathbb Z)$, which is very concrete. Because a maximal order has minimal discriminant, the corresponding Minkowski bound is lower, and so the computation of these ideal classes in our case may be (somewhat) faster.

The author would like to thank his M.Phil. supervisors, Alex Ghitza and Chenyan Wu, for their careful reading of the script, and their valuable corrections and suggestions. Writing of this paper was partially supported by the University of Melbourne Robert George Williams Scholarship, as well as the Australian Government Research Training Program Scholarship.

\section{An Example}
\begin{example}
Take $p$ to be 11. The quaternion algebra $D$ is then $D = \left( \frac{-1, -11}{\mathbb Q}\right)$. In this example we will work with level $N=1$ as doing so greatly reduces the size of $\Omega(N)$. We will then compute the Hecke eigenvalues on the space of functions on $\Omega := U \backslash D^\times (\mathbb A_f) / D^\times (\mathbb Q)$, where $U := \mathcal O_p^\times \times \prod\limits_{l \neq p} \mathcal O_\ell ^\times$ is not quite $U(1)$ as we replace $\mathcal O_p^\times(1)$ with $\mathcal O_p^\times$. Note that this is the same as computing the Hecke eigenvalues on $\Omega(1)$ of weight $k \equiv 0 \mod p^2-1$. Indeed, the modularity condition tells us that these are functions on $\Omega(1)$ that are invariant under the action of $\mathcal O_p^\times$, and so can be identified with functions on $\Omega$. The computation in this section is based on notes of Buzzard \cite{Buz1}.

Firstly, let's understand the set $\Omega$. By the same argument that $\Omega(N)$ corresponds to (isomorphism classes of) invertible left $\mathcal O$-ideals $I$ with $\pi N$-structure, we see that $\Omega$ corresponds just to the invertible left $\mathcal O$-ideals $I$. Running the following in MAGMA \cite{MR1484478}, we compute a maximal order $\mathcal O$ of $D$, and then find that its left ideal class set has order 2, whose elements we call $I_1$ and $I_2$. So $\Omega$ has two elements. We can also compute integer bases for $\mathcal O$, $I_1$ and $I_2$.

\begin{verbatim}
>>	D := QuaternionAlgebra< RationalField() | -1, -11>; 
>>	O := MaximalOrder(D);
>>	Basis(O);
>>	Classes := LeftIdealClasses(O);
>>	#Classes;
>>	I1 := Classes[1];
>>	Basis(I1);
>>	I2 := Classes[2];
>>	Basis(I2);
\end{verbatim}

which outputs:

\begin{verbatim}
[ 1, i, 1/2*i + 1/2*k, 1/2 + 1/2*j ]
2
[ 1, -i, -1/2*i - 1/2*k, 1/2 - 1/2*j ]
[ 2, -2*i, 1 - 3/2*i - 1/2*k, 1/2 - i - 1/2*j ]
\end{verbatim}

We now want to rewrite $I_1$ and $I_2$ as adelic points in $\Omega$. For any invertible left $\mathcal O$-ideal $I$, $I \otimes \mathbb Z_\ell $ is a principal $\mathcal O_\ell$-ideal, generated by any nonzero element $\alpha_\ell $ whose reduced norm has minimal $\ell$-adic valuation. To see this, we refer to Corollary 16.6.12 in Voight \cite{Voi1} that any invertible semi-order (lattice that contains 1 and has reduced norm equal to the ring $R=\mathbb Z_\ell $) is an order, so that $(I \otimes \mathbb Z_\ell )\alpha_\ell ^{-1}$ is an invertible semi-order, which must then by $\mathcal O_\ell$. So if we are given a $\mathbb Z$-basis for $I$, then $I \otimes \mathbb Z_\ell $ is generated by a basis element whose reduced norm has minimal $\ell$-adic valuation. Computing the reduced norms of the given basis elements of $I_1$ and $I_2$ we get $[1,1,3,3]$ and $[4,4,6,4]$ respectively. So we see that $I_1 = \mathcal O$ (which was obvious anyway) and $I_2 \otimes \mathbb Z_\ell  = \mathcal O \otimes \mathbb Z_\ell $ for all $\ell \neq 2$. Moreover, $I_2 \otimes \mathbb Z_2 = \mathcal O \otimes \mathbb Z_2 \cdot (1-\frac{3}{2}i-\frac{1}{2}ij)$ as 6 has minimal 2-adic valuation. Thus $I_1$ corresponds to $w^1:=[1,1,\dots] \in \Omega$ and $I_2$ corresponds to $w^2:=[1-\frac{3}{2}i-\frac{1}{2}ij,1,1, \dots] \in \Omega$ (we use superscripts here to avoid overloading the subscripts, which we want to use for the places). An obvious choice of basis for the vector space of $\overline{\mathbb F}_p$ valued functions on $\Omega$ are the characteristic functions $\mathbbm{1}_{w^1}$ and $\mathbbm{1}_{w^2}$.

To compute the Hecke operator $T_{\ell_0  }$, we will need to work with matrices at the $\ell_0  $-place. Let's compute $T_2$ and $T_3$ with respect to the basis $\{\mathbbm{1}_{w^1},\mathbbm{1}_{w^2}\}$. We will need isomorphisms $\mathcal O \otimes \mathbb Z_2 \cong \mathrm{M}_2(\mathbb Z_2)$ and $\mathcal O \otimes \mathbb Z_3 \cong \mathrm{M}_2(\mathbb Z_3)$. In general, we have an isomorphism
\begin{align*}
D \otimes \mathbb Q_\ell  &\cong \mathrm{M}_2(\mathbb Q_\ell ) \\
i &\mapsto \begin{pmatrix} 0 & -1 \\ 1 & 0 \end{pmatrix}\\
j &\mapsto \begin{pmatrix} x & y \\ y & -x \end{pmatrix} 
\end{align*}
for $x,y \in \mathbb Q_\ell $ such that $x^2+y^2=-11$, when $\ell \neq 11$. For $\mathrm{M}_2(\mathbb Z_3)$, note that $\mathcal O \otimes \mathbb Z_3$ has $\mathbb Z_3$-basis $\{1,i,j,ij\}$, by using the basis for $\mathcal O$ computed above and noticing that 2 is invertible in $\mathbb Z_3$. So for any $x,y \in \mathbb Z_3$ with $x^2+y^2=-11$, the above isomorphism restricts to an injection $\mathcal O \otimes \mathbb Z_3 \hookrightarrow \mathrm{M}_2(\mathbb Z_3)$, which must in fact be an isomorphism by maximality. We could take $(x,y) = (\sqrt{-11},0)$ for $\sqrt{-11}$ a root of $x^2+11=0$ in $\mathbb Z_3$, choosing for example the root which is $1 \mod 3$ by Hensel's lemma.

For $\mathrm{M}_2(\mathbb Z_2)$ we need to be a little more careful because we need $1,i,\frac{1}{2}i + \frac{1}{2}ij, \frac{1}{2}+\frac{1}{2}j$ to all map to elements in $\mathrm{M}_2(\mathbb Z_2)$, rather than just $1,i,j,ij$. If we take $(x,y) = (\sqrt{-15},2)$ for $\sqrt{-15}$ a root of $x^2+15=0$ in $\mathbb Z_2$ (taking for example the root congruent to $1 \mod 4$ by Hensel's lemma), then we see that $\frac{1}{2} + \frac{1}{2}j$ maps to $\begin{psmallmatrix} \frac{1+\sqrt{-15}}{2} & 1 \\ 1 & \frac{1-\sqrt{-15}}{2}\end{psmallmatrix}$, which is in $\mathrm{M}_2(\mathbb Z_2)$. This gives us our desired isomorphism $\mathcal O \otimes \mathbb Z_2 \cong \mathrm{M}_2(\mathbb Z_2)$. Note that if we took instead $(x,y) = (2,\sqrt{-15})$, we would then map $\frac{1}{2}+\frac{1}{2}j$ to $\begin{psmallmatrix} \frac{3}{2} & \frac{\sqrt{-15}}{2} \\ \frac{\sqrt{-15}}{2} & -\frac{1}{2}\end{psmallmatrix}$, which is not in $\mathrm{M}_2(\mathbb Z_2)$.

We now begin our computation of $T_2$, starting with the value of $T_2(\mathbbm{1}_{w^1})(w^1)$. By definition, 
$$2T_2(\mathbbm{1}_{w^1})(w^1) = \mathbbm{1}_{w^1}\left(\begin{pmatrix} 1 & 0 \\ 0 & 2 \end{pmatrix} \cdot [1,1, \dots]\right) + \mathbbm{1}_{w^1}\left(\begin{pmatrix} 1 & 1 \\ 0 & 2 \end{pmatrix} \cdot [1,1, \dots]\right) + \mathbbm{1}_{w^1}\left(\begin{pmatrix} 2 & 0 \\ 0 & 1 \end{pmatrix} \cdot [1,1, \dots]\right)$$
so we reduce to checking, for example, whether $\begin{psmallmatrix} 1 & 0 \\ 0 & 2 \end{psmallmatrix} \cdot [1,1, \dots] = \left[\begin{psmallmatrix} 1 & 0 \\ 0 & 2 \end{psmallmatrix},1,1, \dots\right]$ is the same as $w^1=[1,1,\dots]$ or $w^2 = [1-\frac{3}{2}i-\frac{1}{2}ij,1,1, \dots]$ as an element of $\Omega$. 

The condition that $\left[\begin{psmallmatrix} 1 & 0 \\ 0 & 2 \end{psmallmatrix},1,1, \dots\right] = [1,1,\dots]$ in $\Omega$ is equivalent to saying that the $\mathcal O$-ideal $\mathcal J$ is principal, where $\mathcal J$ is defined by $\mathcal J \otimes \mathbb Z_2 = (\mathcal O \otimes \mathbb Z_2) \cdot \begin{psmallmatrix} 1 & 0 \\ 0 & 2 \end{psmallmatrix}$ and $\mathcal J \otimes \mathbb Z_\ell  = \mathcal O \otimes \mathbb Z_\ell $ for all $\ell \neq 2$. If this was the case, then we see that $\mathcal J$ must be generated by an element $\alpha$ of $\mathcal O$ (since $\mathcal J \otimes \mathbb Z_\ell  \subset \mathcal O \otimes \mathbb Z_\ell $ for all $\ell$) of reduced norm $2 = \mathrm{det}\begin{psmallmatrix} 1 & 0 \\ 0 & 2 \end{psmallmatrix}$. We have previously computed a basis $[1,i,\frac{1}{2}i+\frac{1}{2}ij,\frac{1}{2}+\frac{1}{2}j]$ for $\mathcal O$. Writing $\alpha = t+x \cdot i + y \cdot (\frac{1}{2}i+\frac{1}{2}ij) + z \cdot (\frac{1}{2}+\frac{1}{2}j)$, for $t,x,y,z \in \mathbb Z$, we compute
$$\mathrm{nrd}(\alpha) = (t+\frac{1}{2}z)^2 + (x + \frac{1}{2}y)^2 + \frac{11}{4}y^2 + \frac{11}{4}z^2.$$
For this to be equal to 2, we must have $y=z=0$ and $t^2=x^2=1$. So $\mathcal J$ must be $\mathcal O \cdot (1 \pm i)$. This satisfies $\mathcal J \otimes \mathbb Z_\ell  = \mathcal O \otimes \mathbb Z_\ell $ for $\ell \neq 2$, so we only need to check whether this works at 2. In other words, we need to check if
$$\mathrm{M}_2(\mathbb Z_2) \cdot \begin{pmatrix} 1 & 0 \\ 0 & 2 \end{pmatrix} = \mathrm{M}_2(\mathbb Z_2) \cdot \begin{pmatrix} 1& -1 \\ 1 & 1 \end{pmatrix}$$
or
$$\mathrm{M}_2(\mathbb Z_2) \cdot \begin{pmatrix} 1 & 0 \\ 0 & 2 \end{pmatrix} = \mathrm{M}_2(\mathbb Z_2) \cdot \begin{pmatrix} 1& 1 \\ -1 & 1 \end{pmatrix}$$
where our isomorphism $\mathcal O \otimes \mathbb Z_2 \cong \mathrm{M}_2(\mathbb Z_2)$ sends $1+i$ to $\begin{psmallmatrix} 1& -1 \\ 1 & 1 \end{psmallmatrix}$, and $1-i$ to $\begin{psmallmatrix} 1& 1 \\ -1 & 1 \end{psmallmatrix}$. Equivalently, we need to check if 
$$\begin{pmatrix} 1& -1 \\ 1 & 1 \end{pmatrix}\begin{pmatrix} 1 & 0 \\ 0 & 2 \end{pmatrix}^{-1} \in \mathrm{GL}_2(\mathbb Z_2) \hspace{1cm} \text{or} \hspace{1cm}
\begin{pmatrix} 1& 1 \\ -1 & 1 \end{pmatrix}\begin{pmatrix} 1 & 0 \\ 0 & 2 \end{pmatrix}^{-1} \in \mathrm{GL}_2(\mathbb Z_2).$$
We see that neither is true. So we have computed one of the terms in $T_2(\mathbbm{1}_{w^1})(w^1)$, namely
$$ \mathbbm{1}_{w^1}\left(\begin{pmatrix} 1 & 0 \\ 0 & 2 \end{pmatrix} \cdot [1,1, \dots]\right) = 0.$$

As a sanity check, we verify that $\left[\begin{psmallmatrix} 1 & 0 \\ 0 & 2 \end{psmallmatrix},1,1, \dots\right] = [1-\frac{3}{2}i-\frac{1}{2}ij,1,\dots] = w^2 \in \Omega$, so that $$ \mathbbm{1}_{w^2}\left(\begin{pmatrix} 1 & 0 \\ 0 & 2 \end{pmatrix} \cdot [1,1, \dots]\right) = 1$$
as we would expect. The isomorphism $\mathcal O \otimes \mathbb Z_2 \cong \mathrm{M}_2(\mathbb Z_2)$ sends $1-\frac{3}{2}i-\frac{1}{2}ij$ to $\begin{psmallmatrix} 2 & \frac{3-\sqrt{-15}}{2} \\ \frac{-3-\sqrt{-15}}{2} & 0 \end{psmallmatrix}$. Let $\mathcal J$ be the $\mathcal O$-ideal generated by the adelic point $\left(\begin{psmallmatrix} 1 & 0 \\ 0 & 2 \end{psmallmatrix},1,1, \dots\right)$, and $\mathcal I = I_2$ the ideal generated by $ (1-\frac{3}{2}i-\frac{1}{2}ij,1,\dots)$. Then the condition that they correspond to the same element of $\Omega$ is equivalent to the existence of some $\alpha \in D^\times (\mathbb Q)$ such that $\mathcal J=\mathcal I\alpha$. Checking this locally, this means $\alpha \in (\mathcal O \otimes \mathbb Z_\ell )^\times$ for $\ell \neq 2$, and at $\ell =2$ we have 
\begin{equation}\label{alpha} \begin{pmatrix} 2 & \frac{3-\sqrt{-15}}{2} \\ \frac{-3-\sqrt{-15}}{2} & 0 \end{pmatrix} \alpha \begin{pmatrix} 1 & 0 \\ 0 & 2 \end{pmatrix}^{-1} \in \mathrm{GL}_2(\mathbb Z_2). \end{equation}
We can rewrite this as $$\alpha \in \begin{pmatrix} 2 & \frac{3-\sqrt{-15}}{2} \\ \frac{-3-\sqrt{-15}}{2} & 0 \end{pmatrix}^{-1} \mathrm{GL}_2(\mathbb Z_2)  \begin{pmatrix} 1 & 0 \\ 0 & 2 \end{pmatrix}$$ so we see that $2\alpha \in \mathrm{M}_2(\mathbb Z_2)$, by inverting the matrix and clearing the denominator. It follows that if $\alpha$ exists, we must have $2\alpha \in \mathcal O$, and $\mathrm{nrd}(\alpha)=1$, by checking locally. There are finitely many possibilities for $2\alpha$, namely $\pm 2, \pm 2i, \pm i \pm \frac{1}{2} \pm \frac{1}{2}j, \pm 1 \pm \frac{1}{2}i \pm \frac{1}{2}ij$. We need to check if any of these satisfies (\ref{alpha}) when we replace $\alpha$ with the corresponding matrix in $\frac{1}{2}\mathrm{M}_2(\mathbb Z_2)$. A computation shows that one can take $\alpha = \frac{1}{2}( i + \frac{1}{2}(1+j))$, where we need that actually $\sqrt{-15} \equiv 1 \mod 8$ for our choice of square root.

Returning to our computation of $T_2(\mathbbm{1}_{w^1})(w^1)$, we need to compute the remaining terms $ \mathbbm{1}_{w^1}\left(\begin{psmallmatrix} 1 & 1 \\ 0 & 2 \end{psmallmatrix} \cdot [1,1, \dots]\right)$ and $ \mathbbm{1}_{w^1}\left(\begin{psmallmatrix} 2 & 0 \\ 0 & 1 \end{psmallmatrix} \cdot [1,1, \dots]\right)$. By the same argument as above, it suffices to check whether
$$\begin{pmatrix} 1& -1 \\ 1 & 1 \end{pmatrix}\begin{pmatrix} 1 & 1 \\ 0 & 2 \end{pmatrix}^{-1} \in \mathrm{GL}_2(\mathbb Z_2) \hspace{1cm} \text{or} \hspace{1cm}
\begin{pmatrix} 1& 1 \\ -1 & 1 \end{pmatrix}\begin{pmatrix} 1 & 1 \\ 0 & 2 \end{pmatrix}^{-1} \in \mathrm{GL}_2(\mathbb Z_2)$$
for the first term, and whether
$$\begin{pmatrix} 1& -1 \\ 1 & 1 \end{pmatrix}\begin{pmatrix} 2 & 0 \\ 0 & 1 \end{pmatrix}^{-1} \in \mathrm{GL}_2(\mathbb Z_2) \hspace{1cm} \text{or} \hspace{1cm}
\begin{pmatrix} 1& 1 \\ -1 & 1 \end{pmatrix}\begin{pmatrix} 2 & 0 \\ 0 & 1 \end{pmatrix}^{-1} \in \mathrm{GL}_2(\mathbb Z_2)$$
for the second. The first two are true and the last two are not. So we deduce that 
$$\mathbbm{1}_{w^1}\left(\begin{pmatrix} 1 & 1 \\ 0 & 2 \end{pmatrix} \cdot [1,1, \dots]\right) = 1 \hspace{1cm} \text{and} \hspace{1cm} \mathbbm{1}_{w^1}\left(\begin{pmatrix} 2 & 0 \\ 0 & 1 \end{pmatrix} \cdot [1,1, \dots]\right) = 0.$$
Therefore $$2T_2(\mathbbm{1}_{w^1})(w^1) = 0+1+0=1.$$
We can also deduce that $2T_2(\mathbbm{1}_{w^2})(w^1) = 3-1=2$.

One similarly shows that $2T_2(\mathbbm{1}_{w^1})(w^2) = 3$ and so $2T_2(\mathbbm{1}_{w^2})(w^2)=0$. The required computation is to show that the $\mathcal O$-ideals generated by $\left( g \cdot \begin{psmallmatrix} 2 & \frac{3-\sqrt{-15}}{2} \\ \frac{-3-\sqrt{-15}}{2} & 0 \end{psmallmatrix}, 1, 1, \dots\right)$, for $g = \begin{psmallmatrix} 1 & 0 \\ 0 & 2 \end{psmallmatrix}, \begin{psmallmatrix} 1 & 1 \\ 0 & 2 \end{psmallmatrix}$ and $ \begin{psmallmatrix} 2 & 0 \\ 0 & 1 \end{psmallmatrix}$, are principal $\mathcal O$-ideals. The generators can be taken to be $\alpha = -i+\frac{1}{2} - \frac{1}{2}j, i + \frac{1}{2}-\frac{1}{2}j$ and $2$ respectively.

It follows that $2T_2(\mathbbm{1}_{w^1}) = \mathbbm{1}_{w^1} + 3\mathbbm{1}_{w^2}$ and $2T_2(\mathbbm{1}_{w^2}) = 2\mathbbm{1}_{w^1}$, so $T_2$ has matrix $\frac{1}{2}\begin{psmallmatrix} 1 & 2 \\ 3 & 0 \end{psmallmatrix}$ with respect to the basis $\{\mathbbm{1}_{w^1},\mathbbm{1}_{w^2}\}$. The eigenvalues are $\lambda = \frac{-2}{2},\frac{3}{2} \in \overline{\mathbb F}_{11}$.

The Hecke operator $T_3$ can be computed in a similar way. We give one example evaluating the summand $$ \mathbbm{1}_{w^2}\left(\begin{pmatrix} 1 & 0 \\ 0 & 3 \end{pmatrix} \cdot \left[\begin{pmatrix} 2 & \frac{3-\sqrt{-15}}{2} \\ \frac{-3-\sqrt{-15}}{2} & 0 \end{pmatrix},1,1, \dots \right]\right) =  \mathbbm{1}_{w^2}\left( \left[\begin{pmatrix} 2 & \frac{3-\sqrt{-15}}{2} \\ \frac{-3-\sqrt{-15}}{2} & 0 \end{pmatrix},\begin{pmatrix} 1 & 0 \\ 0 & 3 \end{pmatrix},1, 1,\dots \right]\right)$$ of $3T_3(\mathbbm{1}_{w^2})(w^2)$. One could do this by checking whether the $\mathcal O$-ideal $\mathcal J$ generated by the adelic point in the square brackets is principal. If it is, then this value of $\mathbbm{1}_{w^2}$ is $0$. This only works because $\Omega$ has two elements. In general, we need to check if $\mathcal J$ is in the same left ideal class as the $\mathcal O$-ideal $\mathcal I = I_2$ generated by the adelic point $\left( \begin{psmallmatrix} 2 & \frac{3-\sqrt{-15}}{2} \\ \frac{-3-\sqrt{-15}}{2} & 0 \end{psmallmatrix}, 1, 1, \dots\right)$. So we need to check if there exists $\alpha \in D^\times$ such that $\mathcal J = \mathcal I \alpha$. Checking this locally, we need (for the two isomorphisms $D \otimes \mathbb Q_2 \cong \mathrm{M}_2(\mathbb Q_2)$ and $D \otimes \mathbb Q_3 \cong \mathrm{M}_2(\mathbb Q_3)$ specified)
\begin{equation}\label{alpha2}\begin{pmatrix} 2 & \frac{3-\sqrt{-15}}{2} \\ \frac{-3-\sqrt{-15}}{2} & 0 \end{pmatrix} \alpha \begin{pmatrix} 2 & \frac{3-\sqrt{-15}}{2} \\ \frac{-3-\sqrt{-15}}{2} & 0 \end{pmatrix}^{-1} \in \mathrm{GL}_2(\mathbb Z_2) \end{equation}
and
\begin{equation}\label{alpha3}\alpha \begin{pmatrix} 1 & 0 \\ 0 & 3 \end{pmatrix}^{-1} \in \mathrm{GL}_2(\mathbb Z_3)\end{equation}
and $\alpha \in (\mathcal O \otimes \mathbb Z_\ell )^\times$ for $\ell \neq 2,3$. The first two conditions imply that $2\alpha \in \mathcal O \otimes \mathbb Z_2$ and $\alpha \in \mathcal O \otimes \mathbb Z_3$. So $2\alpha \in \mathcal O$, and $\mathrm{nrd}(\alpha) = 3$. There are finitely many possibilities for $2\alpha$.

Before we go through all the possibilities for $2\alpha$ and check whether they satisfy the two equations, we remark that for equation (\ref{alpha2}) we only really need to work with $2\alpha$ and the matrix $\begin{psmallmatrix} 2 & \frac{3-\sqrt{-15}}{2} \\ \frac{-3-\sqrt{-15}}{2} & 0 \end{psmallmatrix}$ modulo some power of 2 (and for equation (\ref{alpha3}) we only need $2\alpha$ modulo some power of 3). Indeed, using the fact that our choice of $\sqrt{-15}$ is congruent to $1 \mod 8$, we have 
$$\begin{pmatrix} 2 & \frac{3-\sqrt{-15}}{2} \\ \frac{-3-\sqrt{-15}}{2} & 0 \end{pmatrix} \equiv \begin{pmatrix} 2 & 1 \\ 2 & 0 \end{pmatrix} \mod 4.$$
Suppose $2\alpha$ has matrix $\begin{psmallmatrix} a_2 & b_2 \\ c_2 & d_2 \end{psmallmatrix} \in \mathrm{M}_2(\mathbb Z_2)$. Then equation (\ref{alpha2}) tells us (after inverting the matrix) that we need 
$$\begin{pmatrix} 2 & \frac{3-\sqrt{-15}}{2} \\ \frac{-3-\sqrt{-15}}{2} & 0 \end{pmatrix}\begin{pmatrix} a_2 & b_2 \\ c_2 & d_2 \end{pmatrix}  \begin{pmatrix} 0 & \frac{-3+\sqrt{-15}}{2} \\ \frac{3+\sqrt{-15}}{2} & 2 \end{pmatrix} \in 4\mathrm{M}_2(\mathbb Z_2).$$
Note that the norm condition on $\alpha$ will then guarantee that we land in $4 \mathrm{GL}_2(\mathbb Z_2)$ not just $4 \mathrm{M}_2(\mathbb Z_2)$. Working modulo 4, we need
$$\begin{pmatrix} 2 & 1 \\ 2 & 0 \end{pmatrix} \begin{pmatrix} a_2 & b_2 \\ c_2 & d_2 \end{pmatrix}  \begin{pmatrix} 0& -1 \\ 2 & 2 \end{pmatrix} \equiv 0 \mod 4.$$
Expanding this out, this is equivalent to 
\begin{equation}\label{condition2} 2 \mid a_2, \hspace{3em} 4 \mid c_2, \hspace{3em} 2 \mid d_2. \end{equation}
Similarly, if $2\alpha$ has matrix $\begin{psmallmatrix} a_3 & b_3 \\ c_3 & d_3 \end{psmallmatrix} \in \mathrm{M}_2(\mathbb Z_3)$, then equation (\ref{alpha3}) is equivalent to 
\begin{equation}\label{condition3} 3 \mid b_3, \hspace{3em} 3 \mid d_3. \end{equation}

So now we look through $2\alpha \in \mathcal O$ of reduced norm 12. Write 
$$2\alpha = t\cdot 1 + x \cdot i + y \cdot \frac{1}{2}(i+ij) + z \cdot \frac{1}{2}(1+j)$$ for $t,x,y,z \in \mathbb Z$. We also compute the images of the basis elements $1,i, \frac{1}{2}(i+ij), \frac{1}{2}(1+j)$ in $\mathrm{M}_2(\mathbb Z_2)$ and $\mathrm{M}_2(\mathbb Z_3)$, then reduce them modulo 4 and 3 respectively:
$$ i  \mapsto \begin{pmatrix} 0 & -1 \\ 1 & 0 \end{pmatrix} \mod 4 \hspace{7em}  i  \mapsto \begin{pmatrix} 0 & -1 \\ 1 & 0 \end{pmatrix} \mod 3$$
$$ \frac{1}{2}(i+ij)  \mapsto \begin{pmatrix} -1 & 0 \\ 1 & 1 \end{pmatrix} \mod 4 \hspace{7em}  \frac{1}{2}(i+ij)  \mapsto \begin{pmatrix} 0 & 0 \\ 1 & 0 \end{pmatrix} \mod 3.$$
$$ \frac{1}{2}(1+j)  \mapsto \begin{pmatrix} 1 & 1 \\ 1 & 0 \end{pmatrix} \mod 4 \hspace{7em}  \frac{1}{2}(1+j)  \mapsto \begin{pmatrix} 1 & 0 \\ 0 & 0 \end{pmatrix} \mod 3$$
It follows that
$$ 2\alpha  \mapsto \begin{pmatrix} t-y+z & -x+z \\ x+y+z & t+y \end{pmatrix} \mod 4 \hspace{7em}  2\alpha  \mapsto \begin{pmatrix} t+z & -x \\ x+y & t \end{pmatrix} \mod 3.$$
Hence equations (\ref{condition2}) and (\ref{condition3}) tell us that there exists $2\alpha \in \mathcal O$ of reduced norm 12 satisfying equations (\ref{alpha2}) and (\ref{alpha3}), if and only if we can find $t,x,y,z \in \mathbb Z$ such that
\begin{equation*} 2 \mid t-y+z, t+y, \hspace{2em} 4 \mid x+y+z, \hspace{2em} 3 \mid x,t, \hspace{2em} (t+\frac{1}{2}z)^2 + (x + \frac{1}{2}y)^2 + \frac{11}{4}y^2 + \frac{11}{4}z^2 = 12\end{equation*}
where the last equation tells us the reduced norm is 12. We can check that there are no such solutions. For example, we see that $z$ must be even, and then from the norm condition we see that $\lvert z \rvert \leq 2$. Trying $z = 2$, the norm condition tells us that we must have $(t,x,y) = (0,0,0), (-2,0,0)$ or $(-1,\pm1,0)$, none of which satisfy the congruence conditions. For $z=-2$ we similarly must have $(t,x,y) = (0,0,0), (2,0,0)$ or $(1,\pm1,0)$ which also does not work. And finally for $z=0$ we need $t^2 + (x+\frac{1}{2}y)^2 + \frac{11}{4}y^2 = 12$. Since $3 \mid t$, in one case we have $t^2=9$, and then in $(x+\frac{1}{2}y)^2+\frac{11}{4}y^2 = 3$ there are no solutions with $4 \mid x+y$ and $3 \mid x$. Otherwise we have $t=0$ and $(x+\frac{1}{2}y)^2+\frac{11}{4}y^2 = 12$ with $2 \mid y$, $4 \mid x+y$ and $3 \mid x$, which has no solutions, checking $y = 0, \pm 2$.

We deduce that  $$ \mathbbm{1}_{w^2}\left(\begin{pmatrix} 1 & 0 \\ 0 & 3 \end{pmatrix} \cdot \left[\begin{pmatrix} 2 & \frac{3-\sqrt{-15}}{2} \\ \frac{-3-\sqrt{-15}}{2} & 0 \end{pmatrix},1,1, \dots \right]\right) = 0.$$
Performing the remaining calculations, using the relevant mod 3 and 4 matrices we have already written down above, we find that $T_3$ has matrix $\frac{1}{3}\begin{psmallmatrix} 2 & 2 \\ 3 & 1 \end{psmallmatrix}$ with respect to the basis $\{\mathbbm{1}_{w^1},\mathbbm{1}_{w^2}\}$. We compute the eigenvalues to be $\frac{-1}{3}, \frac{4}{3} \in \overline{\mathbb F}_{11}$.

Comparing the matrices for $T_2$ and $T_3$, they are simultaneously diagonalisable (as expected) with eigenvectors $\scriptsize\begin{pmatrix} 2 \\ -3 \end{pmatrix}$ and $\scriptsize\begin{pmatrix} 1 \\ 1 \end{pmatrix}$. The corresponding eigenvalues are $(a_2,a_3)=\left(\frac{-2}{2}, \frac{-1}{3}\right) = (-1,-4)$ and $(a_2,a_3)=\left(\frac{3}{2},\frac{4}{3}\right)=(7,5)$, viewing $a_2,a_3$ as elements of $\overline{\mathbb F}_{11}$. In view of Serre's Theorem \ref{Serre}, one might wonder what mod 11 modular forms give rise to these eigenvalues. The first comes from $\theta^9(\Delta) = q-q^2-4q^3+\dots \mod 11$, which has minimal weight filtration 120 (meaning that 120 is the lowest weight at which a mod 11 modular form has this $q$-expansion), and the second from the Hasse invariant $E_{10}$.

\end{example}

\section{Explicit isomorphisms $\mathcal O \otimes \mathbb Z_\ell  \cong \mathrm{M}_2(\mathbb Z_\ell )$}

We now work with any prime $p$. Let $D$ be the unique quaternion algebra over $\mathbb Q$ ramified exactly at $\{p,\infty\}$. It turns out that one can take $D \cong \left( \frac{-1,-p}{\mathbb Q}\right)$ and $\left( \frac{-2,-p}{\mathbb Q}\right)$ when respectively, $p \equiv 3 \mod 4$ (or $p=2$), and $p \equiv 5 \mod 8$. In the remaining case $p \equiv 1 \mod 8$, we can take $D \cong  \left( \frac{-r,-p}{\mathbb Q}\right)$ for any prime $r \equiv 3 \mod 4$ with $\left(\frac{r}{p}\right) = -1$. This can be verified by computations with the Hilbert symbol as in \cite{Voi1} Chapter 12. We will write $D = \left(\frac{-\epsilon,-p}{\mathbb Q}\right)$ for an appropriate $\epsilon$.

In the example of the previous section, we wrote down explicit isomorphisms $\mathcal O \otimes \mathbb Z_\ell  \cong \mathrm{M}_2(\mathbb Z_\ell )$. In general this is tricky to do, complicated by the fact that $\mathcal O$ might properly contain the order with $\mathbb Z$-basis $\{1,i,j,ij\}$ - we can have nontrivial denominators. See also \cite{MR3156561} for an algorithm for computing such isomorphisms, which arose as a byproduct of other more involved algorithms. Let $\mathcal O$ have $\mathbb Z$-basis $\{s^1,s^2,s^3,s^4\}$. What we need to do is find matrices $A,B \in \mathrm{M}_2(\mathbb Z_\ell )$ (corresponding to $i,j$) such that $$A^2 = -\epsilon, \hspace{2em} B^2 = -p, \hspace{2em}AB=-BA,$$ and such that the matrices corresponding to $s^1,s^2,s^3,s^4$, which a priori have entries in $\mathbb Q_\ell $, actually have entries in $\mathbb Z_\ell $. This latter condition just imposes some congruences on the entries of $A$ and $B$. For example, in the case $p=11$ using the given basis for $\mathcal O$, we also require that 
$$\frac{1}{2}(A+AB) \in \mathrm{M}_2(\mathbb Z_\ell ) \hspace{5em} \text{and} \hspace{5em} \frac{1}{2}(1+B) \in \mathrm{M}_2(\mathbb Z_\ell )$$
which only imposes congruence conditions when $\ell =2$. If we can do this in the general case, then this gives us an isomorphism $D \otimes \mathbb Q_\ell  \cong \mathrm{M}_2(\mathbb Q_\ell )$, which restricts to an injection $\mathcal O \otimes \mathbb Z_\ell  \hookrightarrow \mathrm{M}_2(\mathbb Z_\ell )$. But $\mathcal O \otimes \mathbb Z_\ell $ is a maximal order in $D \otimes \mathbb Q_\ell $, so this injection must actually be our desired isomorphism.

The main observation is that, for our purposes, we really only need the matrices corresponding to the $s^1,s^2,s^3,s^4$ modulo some sufficiently large power of $\ell$. Consequently, we only need to compute $A$ and $B$ modulo some $\ell^{n_\ell }$. This was hinted at in the computation of $T_3$ in the example above. We will describe a formula for a sufficiently large $n_\ell $ later. So we need to search for $A_0$ and $B_0$ in $\mathrm{M}_2(\mathbb Z / \ell^{n_\ell }\mathbb Z)$ for some $n_\ell $, which satisfy $$A_0^2 = -\epsilon, \hspace{2em}B_0^2 = -p, \hspace{2em}A_0B_0=-B_0A_0 \mod \ell^{n_\ell }$$ and the congruence conditions imposed by the basis of $\mathcal O$ (we take $n_\ell $ sufficiently large so that these congruences can be viewed as congruences modulo $\ell^{n_\ell }$). We know that such $A_0$ and $B_0$ exist because we know that there exists an isomorphism $\mathcal O \otimes \mathbb Z_\ell  \cong \mathrm{M}_2(\mathbb Z_\ell )$. Since $i, j \in \mathcal O$ have reduced trace 0, we can furthermore assume that $A_0$ and $B_0$ are trace-free. Then such a solution can be found by a finite enumeration of all matrices in $\mathrm{M}_2(\mathbb Z/\ell^{n_\ell } \mathbb Z)$ (a more efficient method would be to use the lemma below repeatedly). The claim then is that these solutions can be lifted to our desired $A$ and $B$ (although we do not need to write down $A$ and $B$, just know that $A_0$ and $B_0$ lift). Note that $A$ and $B$ will automatically satisfy the congruence conditions from the basis of $\mathcal O$ because we are lifting from $A_0$ and $B_0$. This is the content of the following lemma (for $\ell \neq 2$). 

\begin{lemma}\label{lift matrices}
Suppose we have trace-free matrices $A_0,B_0 \in \mathrm{M}_2(\mathbb Z / \ell^m \mathbb Z)$ for $\ell \neq p$ an odd prime and $m \geq 2$, and suppose they satisfy $$A_0^2 = -\epsilon \mod \ell^m,\hspace{2em} B_0^2 = -p  \mod \ell^m, \hspace{2em}A_0B_0=-B_0A_0 \mod \ell^{m}.$$ Then we can find trace-free matrices $A_1,B_1 \in \mathrm{M}_2(\mathbb Z / \ell^{m+1} \mathbb Z)$ satisfying $$A_1^2 = -\epsilon \mod \ell^{m+1},\hspace{2em} B_1^2 = -p \mod \ell^{m+1}, \hspace{2em}A_1B_1=-B_1A_1 \mod \ell^{m+1}$$ with $A_1 \equiv A_0 \mod \ell^m$ and $B_1 \equiv B_0 \mod \ell^m$. It follows that we can lift to desired matrices $A,B \in \mathrm{M}_2(\mathbb Z_\ell )$ by induction.
\end{lemma}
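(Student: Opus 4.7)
The approach is a direct Hensel-style lift: write $A_1 = A_0 + \ell^m X$ and $B_1 = B_0 + \ell^m Y$ for trace-free $X, Y \in \mathrm{M}_2(\mathbb{Z}/\ell\mathbb{Z})$ to be determined. Since $m \geq 2$ gives $2m \geq m+1$, all quadratic corrections in $X, Y$ vanish modulo $\ell^{m+1}$, and the three defining relations reduce to the linear conditions
\begin{align*}
A_0 X + X A_0 &\equiv -\ell^{-m}(A_0^2 + \epsilon I) \mod \ell, \\
B_0 Y + Y B_0 &\equiv -\ell^{-m}(B_0^2 + p I) \mod \ell, \\
A_0 Y + Y A_0 + X B_0 + B_0 X &\equiv -\ell^{-m}(A_0 B_0 + B_0 A_0) \mod \ell.
\end{align*}

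The key observation is that for any two trace-free $2 \times 2$ matrices $M, N$, the anticommutator $MN + NM = (M+N)^2 - M^2 - N^2$ is a scalar matrix, since each square equals $-\det(\cdot)\, I$ by Cayley--Hamilton. Hence both sides of each equation lie in the scalars, and the three conditions become scalar-valued linear equations on $X, Y$ in the $3$-dimensional $\mathbb{F}_\ell$-vector space $V$ of trace-free $2 \times 2$ matrices. The symmetric bilinear form $\{M, N\} := MN + NM$ on $V$ equals $-2$ times the bilinear form polarising the determinant, which is non-degenerate since $\ell \neq 2$.

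I solve the system in two steps. First, $X \mapsto \{A_0, X\}$ is a non-zero (hence surjective) functional $V \to \mathbb{F}_\ell$ as soon as $A_0 \not\equiv 0 \mod \ell$; and $A_0 \equiv 0 \mod \ell$ would force $-\epsilon I \equiv A_0^2 \equiv 0 \mod \ell^2$, hence $\ell^2 \mid \epsilon$, impossible since $\epsilon \in \{1, 2, r\}$ is squarefree. So the first equation has a solution $X$. With such $X$ fixed, the remaining two equations form a joint system governed by the linear map $Y \mapsto (\{B_0, Y\}, \{A_0, Y\}) \in \mathbb{F}_\ell^2$, which is surjective provided $A_0, B_0$ are linearly independent in $V$. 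If instead $A_0 \equiv \lambda B_0 \mod \ell$, then $0 \equiv A_0 B_0 + B_0 A_0 \equiv 2\lambda B_0^2 \equiv -2\lambda p I \mod \ell$ forces $\lambda \equiv 0$ (using $\ell$ odd and $\ell \neq p$), contradicting $A_0 \not\equiv 0$. Hence $Y$ exists.

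The main obstacle is precisely these two non-vanishing checks ($A_0 \not\equiv 0 \mod \ell$, and $A_0, B_0$ independent mod $\ell$); everything else is formal Hensel bookkeeping, and iterating produces the desired $A, B \in \mathrm{M}_2(\mathbb{Z}_\ell)$.
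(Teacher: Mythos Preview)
Your proof is correct and follows essentially the same Hensel-lift argument as the paper: write $A_1=A_0+\ell^m X$, $B_1=B_0+\ell^m Y$, use that the anticommutator of trace-free $2\times 2$ matrices is scalar, and reduce to a linear system solvable once $A_0,B_0$ are independent mod $\ell$; your bilinear-form packaging is a slightly cleaner version of the paper's explicit coordinate computation. One small omission: linear dependence also includes the case $B_0\equiv 0\pmod\ell$ with $A_0\not\equiv 0$, which is not of the form $A_0\equiv\lambda B_0$; this is ruled out symmetrically since $B_0\equiv 0\pmod\ell$ would give $\ell^2\mid p$ (using $m\geq 2$), contradicting $\ell\neq p$.
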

\begin{proof}
Our proof involves writing out all the matrix entries and multiplying them together. The requirement that the matrices be trace-free simplify our calculations. Let $$A_0 = \begin{pmatrix} a_1 & a_2 \\ a_3 & -a_1 \end{pmatrix} \mod \ell^m \hspace{5em} B_0 = \begin{pmatrix} b_1 & b_2 \\ b_3 & -b_1 \end{pmatrix} \mod \ell^m.$$ Lifting these to (trace-free) matrices modulo $\ell^{m+1}$, we will write $$A_1 = A_0 + \ell^m X \mod \ell^{m+1} \hspace{5em} B_1 = B_0 + \ell^m Y \mod \ell^{m+1}$$ for trace-free matrices $$X = \begin{pmatrix} x_1 & x_2 \\ x_3 & -x_1 \end{pmatrix} \mod \ell \hspace{5em} Y = \begin{pmatrix} y_1 & y_2 \\ y_3 & -y_1 \end{pmatrix} \mod \ell.$$

A useful calculation is the following: if $P = \begin{psmallmatrix} p_1 & p_2 \\ p_3 & -p_1 \end{psmallmatrix}$ and $Q = \begin{psmallmatrix} q_1 & q_2 \\ q_3 & -q_1 \end{psmallmatrix}$ are trace-free matrices, then 
$$PQ+QP = \begin{pmatrix} 2p_1q_1 + p_2q_3 + p_3q_2 & 0 \\ 0 & 2p_1q_1+p_2q_3+p_3q_2\end{pmatrix}.$$
It follows that if we lift $A_0$ and $B_0$ to {\it any} trace-free matrices modulo $\ell^{m+1}$, also denoted $A_0$ and $B_0$, we have that 
$$A_0^2 = -\epsilon + \ell^m (*) \mod \ell^{m+1}, \hspace{2em} B_0^2 = -p + \ell^m (*) \mod \ell^{m+1}, \hspace{2em} A_0B_0+B_0A_0 = \ell^m(*) \mod \ell^{m+1}$$ where the $(*)$ are all {\it scalar} matrices modulo $\ell$. The conditions on $X$ and $Y$ coming from those on $A_1$ and $B_1$ are the following:
$$2a_1x_1+a_2x_3+a_3x_2 = (*) \mod \ell, \hspace{2em} 2b_1y_1+b_2y_3+b_3y_2 = (*) \mod \ell $$ $$ (2a_1y_1+a_2y_3+a_3y_2) +( 2b_1x_1+b_2x_3+b_3x_2) = (*) \mod \ell$$ for some scalars $(*) \mod \ell$. We can rewrite this as 
$$\begin{pmatrix}
 2a_1 & a_3 & a_2 & 0 & 0 & 0 \\
0 & 0 & 0 & 2b_1 & b_3 & b_2 \\
2b_1 & b_3 & b_2 & 2a_1 & a_3 & a_2 
 \end{pmatrix} \cdot \small\begin{pmatrix} x_1 \\ x_2 \\ x_3 \\ y_1 \\ y_2 \\ y_3 \end{pmatrix} = (*) \mod \ell$$
for some arbitrary vector $(*) \in \mathbb F_\ell ^3$. This is always possible if the vectors $\begin{pmatrix} 2a_1 & a_3 & a_2\end{pmatrix} \mod \ell$ and $\begin{pmatrix} 2b_1 & b_3 & b_2 \end{pmatrix} \mod \ell$ are linearly independent. Since $\ell \neq 2$, linear dependence is equivalent to the existence of a nonzero scalar $\lambda$ such that $A_0 = \lambda B_0 \mod \ell$, or that either $A_0$ or $B_0$ is 0 mod $\ell$. The latter two cases are not possible because otherwise $A_0^2 = 0 \mod \ell^2$ or $B_0^2 = 0 \mod \ell^2$. Since $m \geq 2$, this means that $\ell^2$ divides $\epsilon$ or $p$, which does not happen (we had to use $\ell^2$ in case $\epsilon = r$ and $\ell =r$). And in the first case, $A_0B_0+B_0A_0=0 \mod \ell$ tells us that $A_0^2=B_0^2=0 \mod \ell$, which is false since $\ell \neq p$. Hence we can always lift $A_0,B_0 \mod \ell^m$ to $A_1,B_1 \mod \ell^{m+1}$.
\end{proof}

\subsection{The case $\ell =2$}

When $\ell =2$, the argument of Lemma \ref{lift matrices} does not work, analogous to the difficulty of using a naive Hensel's lemma for finding square roots in $\mathbb Z_2$. We will make use of a generalised Hensel's lemma for multiple variables, which can be found as Theorem 3.3 in Conrad's notes \cite{Con1}, specialised to the case of $\mathbb Q_2$ with its usual absolute value $\lvert \cdot \rvert$.

\begin{thm}\label{Hensel}
Let $\mathbf{f} = (f_1,f_2,\dots, f_d) \in \mathbb Z_2[X_1,X_2,\dots,X_d]^d$ and $\mathbf{a}=(a_1,\dots,a_d) \in \mathbb Z_2^d$ satisfy 
$$\norm{\mathbf{f}(\mathbf{a})} < \lvert J_{\mathbf{f}}(\mathbf{a}) \rvert^2$$
where $J_{\mathbf{f}}$ is the Jacobian of $f$ - the determinant of its derivative matrix - and the norm of a vector is defined to be the maximum of the absolute values of its entries. Then there is a unique $\mathbf{\alpha} \in \mathbb Z_2^d$ such that $\mathbf{f}(\mathbf{\alpha})=0$ and $\norm{\mathbf{\alpha}-\mathbf{a}} < \lvert J_{\mathbf{f}}(\mathbf{a})\rvert$.
\end{thm}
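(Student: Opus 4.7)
The plan is to prove this by Newton's method adapted to the non-archimedean setting. I will construct a sequence $\mathbf{a}_0 = \mathbf{a}$, $\mathbf{a}_{n+1} = \mathbf{a}_n - \mathbf{f}'(\mathbf{a}_n)^{-1}\mathbf{f}(\mathbf{a}_n)$, where $\mathbf{f}'$ is the Jacobian matrix (so $J_\mathbf{f} = \det \mathbf{f}'$), and show it converges in $\mathbb{Z}_2^d$ to the desired root $\boldsymbol{\alpha}$, then establish uniqueness in the ball $\norm{\mathbf{x} - \mathbf{a}} < \lvert J_\mathbf{f}(\mathbf{a}) \rvert$.

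The first ingredient is a non-archimedean Taylor expansion: since the $f_i$ are polynomials with coefficients in $\mathbb{Z}_2$, for $\mathbf{x}, \mathbf{h} \in \mathbb{Z}_2^d$,
\[
\mathbf{f}(\mathbf{x} + \mathbf{h}) = \mathbf{f}(\mathbf{x}) + \mathbf{f}'(\mathbf{x})\mathbf{h} + \mathbf{r}(\mathbf{x}, \mathbf{h}), \qquad \norm{\mathbf{r}(\mathbf{x}, \mathbf{h})} \leq \norm{\mathbf{h}}^2,
\]
(the quadratic bound comes from expanding $f_i(\mathbf{x} + \mathbf{h}) - f_i(\mathbf{x}) - \nabla f_i(\mathbf{x})\cdot \mathbf{h}$ as a polynomial in the $h_j$ whose lowest total degree term is $2$, with $\mathbb{Z}_2$-coefficients). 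Writing the inverse as $\mathbf{f}'(\mathbf{x})^{-1} = J_\mathbf{f}(\mathbf{x})^{-1}\,\mathrm{adj}(\mathbf{f}'(\mathbf{x}))$ and noting that the adjugate has polynomial entries in $\mathbf{f}'(\mathbf{x})$, hence norm $\leq 1$ on $\mathbb{Z}_2^d$, the Newton step $\mathbf{h}_n := \mathbf{a}_{n+1} - \mathbf{a}_n$ satisfies $\norm{\mathbf{h}_n} \leq \norm{\mathbf{f}(\mathbf{a}_n)}/\lvert J_\mathbf{f}(\mathbf{a}_n)\rvert$.

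The core inductive claim is that $\lvert J_\mathbf{f}(\mathbf{a}_n)\rvert = \lvert J_\mathbf{f}(\mathbf{a})\rvert$ and $c_n := \norm{\mathbf{f}(\mathbf{a}_n)}/\lvert J_\mathbf{f}(\mathbf{a}_n)\rvert^2$ satisfies $c_{n+1} \leq c_n^2$. The hypothesis gives $c_0 < 1$, so quadratic decay forces $c_n \to 0$. For the quadratic estimate, the very definition of $\mathbf{h}_n$ kills the linear term of Taylor, so $\mathbf{f}(\mathbf{a}_{n+1}) = \mathbf{r}(\mathbf{a}_n, \mathbf{h}_n)$, whence $\norm{\mathbf{f}(\mathbf{a}_{n+1})} \leq \norm{\mathbf{h}_n}^2 \leq c_n^2 \lvert J_\mathbf{f}(\mathbf{a}_n)\rvert^2$. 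For the invariance of $\lvert J_\mathbf{f}\rvert$, I Taylor-expand the polynomial $J_\mathbf{f}$ itself: $J_\mathbf{f}(\mathbf{a}_{n+1}) = J_\mathbf{f}(\mathbf{a}_n) + O(\norm{\mathbf{h}_n})$ with an implied constant of norm $\leq 1$, and since $\norm{\mathbf{h}_n} \leq c_n \lvert J_\mathbf{f}(\mathbf{a}_n)\rvert < \lvert J_\mathbf{f}(\mathbf{a}_n)\rvert$, the ultrametric inequality preserves the valuation. Completeness of $\mathbb{Z}_2^d$ plus the bound $\norm{\mathbf{h}_n} \leq c_n \lvert J_\mathbf{f}(\mathbf{a})\rvert$ gives Cauchy convergence to some $\boldsymbol{\alpha}$ with $\mathbf{f}(\boldsymbol{\alpha}) = 0$ and $\norm{\boldsymbol{\alpha} - \mathbf{a}} \leq \norm{\mathbf{h}_0} < \lvert J_\mathbf{f}(\mathbf{a})\rvert$, using the ultrametric.

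For uniqueness, suppose $\boldsymbol{\beta} \in \mathbb{Z}_2^d$ also satisfies $\mathbf{f}(\boldsymbol{\beta}) = 0$ and $\norm{\boldsymbol{\beta} - \mathbf{a}} < \lvert J_\mathbf{f}(\mathbf{a})\rvert$. Setting $\mathbf{k} = \boldsymbol{\beta} - \boldsymbol{\alpha}$ and Taylor-expanding,
\[
0 = \mathbf{f}(\boldsymbol{\beta}) - \mathbf{f}(\boldsymbol{\alpha}) = \mathbf{f}'(\boldsymbol{\alpha})\mathbf{k} + \mathbf{r}(\boldsymbol{\alpha}, \mathbf{k}),
\]
so $\mathbf{k} = -\mathbf{f}'(\boldsymbol{\alpha})^{-1}\mathbf{r}(\boldsymbol{\alpha}, \mathbf{k})$; since $\lvert J_\mathbf{f}(\boldsymbol{\alpha})\rvert = \lvert J_\mathbf{f}(\mathbf{a})\rvert$ by the invariance argument extended to the limit, this yields $\norm{\mathbf{k}} \leq \norm{\mathbf{k}}^2/\lvert J_\mathbf{f}(\mathbf{a})\rvert$, contradicting $\norm{\mathbf{k}} < \lvert J_\mathbf{f}(\mathbf{a})\rvert$ unless $\mathbf{k} = 0$. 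The main obstacle will be keeping $\lvert J_\mathbf{f}(\mathbf{a}_n)\rvert$ constant throughout the iteration, since the squaring of defects $c_n$ depends on the denominators not eroding; this is handled by the interplay between the inequality $c_n < 1$ and the ultrametric, but must be carried carefully through the induction alongside the convergence estimate.
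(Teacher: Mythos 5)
The paper does not prove this statement; it cites it verbatim from Conrad's notes (Theorem~3.3 there), specialised to $\mathbb{Q}_2$, so there is no in-paper proof to compare against. Your Newton-iteration argument is the standard proof of the multivariable non-archimedean Hensel/implicit-function lemma, and it is essentially correct: the three pillars — the $\mathbb{Z}_2$-coefficient Taylor bound $\norm{\mathbf{r}(\mathbf{x},\mathbf{h})}\leq\norm{\mathbf{h}}^2$, the adjugate estimate $\norm{\mathbf{f}'(\mathbf{x})^{-1}\mathbf{v}}\leq\norm{\mathbf{v}}/\lvert J_\mathbf{f}(\mathbf{x})\rvert$, and the invariance $\lvert J_\mathbf{f}(\mathbf{a}_{n+1})\rvert=\lvert J_\mathbf{f}(\mathbf{a}_n)\rvert$ forced by the ultrametric once $\norm{\mathbf{h}_n}<\lvert J_\mathbf{f}(\mathbf{a}_n)\rvert$ — feed each other cleanly into the induction $c_{n+1}\leq c_n^2$ with $c_0<1$, and the uniqueness argument via $\norm{\mathbf{k}}\leq\norm{\mathbf{k}}^2/\lvert J_\mathbf{f}(\mathbf{a})\rvert$ is exactly right.

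One small imprecision: the inequality $\norm{\boldsymbol{\alpha}-\mathbf{a}}\leq\norm{\mathbf{h}_0}$ does not quite follow from what you have shown (you have the upper bound $\norm{\mathbf{h}_n}\leq c_n\lvert J_\mathbf{f}(\mathbf{a})\rvert$ but no guarantee that $\norm{\mathbf{h}_0}$ dominates the later $\norm{\mathbf{h}_n}$). The correct statement via the ultrametric is $\norm{\boldsymbol{\alpha}-\mathbf{a}}\leq\max_n\norm{\mathbf{h}_n}\leq c_0\lvert J_\mathbf{f}(\mathbf{a})\rvert<\lvert J_\mathbf{f}(\mathbf{a})\rvert$, which delivers the needed strict inequality. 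With that repair, the proof is complete and, as far as one can tell, is the same proof as in the cited source.
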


To see how to apply this, we are looking for matrices $A,B \in \mathrm{M}_2(\mathbb Z_2)$ satisfying $$A^2 = -\epsilon, \hspace{2em} B^2 = -p, \hspace{2em}AB=-BA,$$ which also satisfy certain congruence conditions mod 4 (the highest power of 2 possibly dividing denominators in $s^1,s^2,s^3,s^4$ - see Proposition \ref{pizer basis} to follow). Because we know $\mathcal O \otimes \mathbb Z_2 \cong \mathrm{M}_2(\mathbb Z_2)$, we know that such $A$ and $B$ exist, so we can find such a solution modulo some power of 2 bigger than 4 and try to lift to $\mathbb Z_2$. If $A=\begin{psmallmatrix} a_1 & a_2 \\ a_3 & -a_1 \end{psmallmatrix}$ and $B=\begin{psmallmatrix} b_1 & b_2 \\ b_3 & -b_1 \end{psmallmatrix}$ with entries in $\mathbb Z_2$, then the conditions $$A^2 = -\epsilon, \hspace{2em} B^2 = -p, \hspace{2em}AB=-BA,$$
are equivalent to
\begin{equation}\label{defining}
a_1^2 + a_2a_3 = -\epsilon, \hspace{3em} b_1^2+b_2b_3 = -p, \hspace{3em} 2a_1b_1+a_2b_3+a_3b_2 = 0.
\end{equation}

\begin{lemma}
Suppose we have $a,b,c,x,y,z \in \mathbb Z$ satisfying
\begin{equation}\label{defining2} a^2+bc=-\epsilon \mod 128, \hspace{3em} x^2+yz = -p \mod 128, \hspace{3em} 2ax+bz+cy=0 \mod 128. \end{equation}
Then these are congruent to some $a_1,a_2,a_3,b_1,b_2,b_3 \in \mathbb Z_2$ modulo 128 respectively, satisfying equation (\ref{defining}).
\end{lemma}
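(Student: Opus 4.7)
The plan is to apply Hensel's lemma (Theorem~\ref{Hensel}, in its strengthened form giving $\lVert \alpha - \mathbf a \rVert \leq \lVert \mathbf f(\mathbf a)\rVert / \lvert J_{\mathbf f}(\mathbf a)\rvert$, which follows from the quadratic convergence of the Newton iterates) to the system (\ref{defining}). I view these as three polynomials $f_1, f_2, f_3$ in three of the six unknowns, with the other three fixed to lifts of the given integers $a, b, c, x, y, z$ to $\mathbb Z_2$. With $\lVert \mathbf f(\mathbf a)\rVert \leq 2^{-7}$ from the hypothesis (\ref{defining2}), the desired conclusion $\alpha \equiv \mathbf a \pmod{128}$ then follows provided the corresponding $3 \times 3$ minor of the Jacobian is a unit in $\mathbb Z_2$.

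It therefore suffices to produce, for any data satisfying the mod-$128$ hypothesis, some $3 \times 3$ submatrix of the full $3 \times 6$ Jacobian
\[
\begin{pmatrix}
2a_1 & a_3 & a_2 & 0 & 0 & 0 \\
0 & 0 & 0 & 2b_1 & b_3 & b_2 \\
2b_1 & b_3 & b_2 & 2a_1 & a_3 & a_2
\end{pmatrix}
\]
whose determinant is odd. I would proceed by case analysis on the mod-$2$ reductions of $a, b, c, x, y, z$, which are constrained by the equations: e.g.\ $f_1 \equiv 0 \pmod 2$ (for $\epsilon$ odd) forces either $a$ odd, or $a$ even with $b, c$ both odd, and analogously for the second equation. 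Most cases give a unit minor by inspection; in the more delicate subcases one uses the identity
\[
(a_3 b_2 - a_2 b_3)^2 = -4 (\epsilon p + \epsilon b_1^2 + p a_1^2),
\]
derived by combining all three defining equations, to pin down the $2$-adic valuation of the $2 \times 2$ sub-determinants appearing in the minor expansions (in terms of the parities of $a_1, b_1$).

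The main obstacle is the residual case where $a, x$ are both even while $b, c, y, z$ are all odd --- which, again by the identity above, can occur only when $\epsilon p \equiv 3 \pmod 4$: here the Jacobian reduces to rank $2$ mod $2$, so no $3 \times 3$ minor is a unit and Hensel does not apply directly at the given point. I would address this case either by first refining the given mod-$128$ data to a mod-$256$ solution with the same mod-$128$ residues --- the compatibility across the rank-defect being exactly the linear combination of the three equations made visible by the identity above --- and then invoking Hensel at the refined point, or alternatively by conjugating the pair $(A, B)$ by a suitable $g \in \mathrm{GL}_2(\mathbb Z_2)$ (an operation preserving the system (\ref{defining})) to reach a conjugate representative with a unit $3 \times 3$ minor available, applying Hensel there, and conjugating back. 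Once every case is handled, Theorem~\ref{Hensel} delivers the required $\alpha \in \mathbb Z_2^6$ satisfying (\ref{defining}) and congruent to the given data modulo $128$.
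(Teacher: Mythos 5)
Your approach diverges from the paper's in a way that opens a genuine gap. The paper applies Theorem~\ref{Hensel} in its weak form, needing only $\lVert\mathbf{f}(\mathbf a)\rVert < \lvert J_{\mathbf f}(\mathbf a)\rvert^2$, which with $\lVert\mathbf f(\mathbf a)\rVert\le 2^{-7}$ amounts to $16\nmid J_{\mathbf f}(\mathbf a)$; the case analysis in the paper is designed to guarantee exactly this (and, as a consequence, the paper only actually produces a lift congruent to $\mathbf a$ mod $16$, not mod $128$ -- which is still more than enough for the application, where only mod-$4$ agreement is needed to preserve the integrality of $s^1,\dots,s^4$). You instead demand a \emph{unit} $3\times 3$ minor, because you want the literal mod-$128$ conclusion via the strengthened estimate $\lVert\alpha-\mathbf a\rVert\le\lVert\mathbf f(\mathbf a)\rVert/\lvert J\rvert$. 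That is a much stronger requirement, and it genuinely fails in several cases, not just the one you single out.

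Concretely, you name the obstruction $a,x$ even with $b,c,y,z$ odd, but you miss e.g.\ $a,x$ odd with $b,c$ both even (which does occur: $a$ odd forces $a^2\equiv 1\bmod 8$, and $bc\equiv 0\bmod 4$ then forces $\epsilon\equiv 3\bmod 4$, i.e.\ $p\equiv 1\bmod 8$). In that case the entire first row $(2a,\;c,\;b,\;0,\;0,\;0)$ of the $3\times 6$ Jacobian is even, so \emph{every} $3\times 3$ minor is even and no unit minor exists; the symmetric case $y,z$ both even is analogous. The paper handles this in its case (iv) by accepting a Jacobian with $v_2=2$, e.g.\ $J_{\mathbf f_{a,x,z}}=4a(bx-ay)$, which is fine for $16\nmid J$ but useless for your purposes. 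Your proposed rescues also don't close the gap: refining the given data to a solution mod $256$ with the same mod-$128$ residues cannot change the mod-$2$ reduction of the Jacobian (which is what governs rank, hence the existence of a unit minor), and the conjugation idea -- while plausible in spirit, since conjugating $(A,B)$ by $g\in\mathrm{GL}_2(\mathbb Z_2)$ preserves the defining relations and acts transitively enough on mod-$2$ data -- is asserted rather than carried out, and would itself require a nontrivial argument that a conjugate with unit Jacobian always exists. To fix the argument, either (a) prove only what the paper needs, namely $16\nmid J$ for some choice of free variables, by the paper's case analysis, and accept the weaker but sufficient mod-$16$ (or mod-$4$) congruence; or (b) if you want the full mod-$128$ conclusion, you must supply a complete treatment of \emph{all} the rank-defect cases, not just the one you noticed, with a concrete (rather than sketched) mechanism for lifting through a degenerate Jacobian.
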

\begin{proof}
We have 3 equations and 6 variables, so to apply Theorem \ref{Hensel} we need to fix 3 variables. For example, if we fixed $(b,c,x)$ and considered the polynomials 
$$f_1(X_1,X_2,X_3) = X_1^2+bc+\epsilon, \hspace{2em} f_2(X_1,X_2,X_3) = x^2+X_2X_3 + p, \hspace{2em} f_3(X_1,X_2,X_3) = 2xX_1+bX_3+cX_2$$
then we know that $\mathbf{f}(a,y,z) = 0 \mod 128$. Note $128 = 2^7$. The derivative matrix is $$(D\mathbf{f})(X) = \begin{pmatrix} 2X_1 & 0 & 0 \\ 0 & X_3 & X_2 \\ 2x & c & b \end{pmatrix}.$$
In order to lift to a solution  $a_1,a_2,a_3,b_1,b_2,b_3 \in \mathbb Z_2$, we need $16=2^4$ to not divide 
$$J_{\mathbf{f}}(a,y,z) = 2a(bz-cy).$$
Whether this holds depends on the $a,b,c,x,y,z$ we were given. If this does not hold for our particular $a,b,c,x,y,z$, then we could instead fix some of the other variables, replacing $\mathbf{f}$ with some $\mathbf{g}$, and check whether 16 divides the new value of $J_{\mathbf{g}}$. This means that the proof reduces to an analysis of several cases depending on the parities of the $a,b,c,x,y,z$.

\begin{notn}
Let $\mathbf{f}_{r,s,t}$ denote the length 3 vector of polynomials in 3 variables obtained by fixing the variables \textbf{other than} $r,s,t \in \{a,b,c,x,y,z\}$. So for example we considered the case $(r,s,t)=(a,y,z)$ above.
\end{notn}
We compute the following Jacobians:
\begin{equation}\label{Jac1}
(D \mathbf{f}_{a,x,b})(X) = 
\begin{pmatrix}
2X_1 & 0 & c\\
0 & 2X_2 & 0 \\
2X_2 & 2X_1 & z
\end{pmatrix}
\hspace{3em}
J_{\mathbf{f}_{a,x,b}}(a,x,b) = 4x(az-cx).
\end{equation}

\begin{equation}\label{Jac2}
(D \mathbf{f}_{a,x,c})(X) = 
\begin{pmatrix}
2X_1 & 0 & b\\
0 & 2X_2 & 0 \\
2X_2 & 2X_1 & y
\end{pmatrix}
\hspace{3em}
J_{\mathbf{f}_{a,x,c}}(a,x,c) = 4x(ay-bx).
\end{equation}

\begin{equation}\label{Jac3}
(D \mathbf{f}_{a,x,y})(X) = 
\begin{pmatrix}
2X_1 & 0 & 0\\
0 & 2X_2 & z \\
2X_2 & 2X_1 & c
\end{pmatrix}
\hspace{3em}
J_{\mathbf{f}_{a,x,y}}(a,x,y) = 4a(cx-az).
\end{equation}

\begin{equation}\label{Jac4}
(D \mathbf{f}_{a,x,z})(X) = 
\begin{pmatrix}
2X_1 & 0 & 0\\
0 & 2X_2 & y \\
2X_2 & 2X_1 & b
\end{pmatrix}
\hspace{3em}
J_{\mathbf{f}_{a,x,z}}(a,x,z) = 4a(bx-ay).
\end{equation}

\begin{equation}\label{Jac5}
(D \mathbf{f}_{b,y,z})(X) = 
\begin{pmatrix}
c & 0 & 0\\
0 & X_3 & X_2 \\
X_3 & c & X_1
\end{pmatrix}
\hspace{3em}
J_{\mathbf{f}_{b,y,z}}(b,y,z) = c(bz-cy).
\end{equation}

Now we split into several cases. Firstly, assume $p,\epsilon \neq 2$ are odd. We will frequently use from equation (\ref{defining2}) that $bz+cy$ is divisible by $2ax$ mod 128, and in particular is even.

\begin{enumerate}[label=(\roman*)]
\item \textbf{$a,x$ both even.} From equation (\ref{defining2}), we see that $b,c,y,z$ are all odd. But $4 \mid bz+cy$ and so $4 \nmid bz-cy$. So using (\ref{Jac5}), we can lift using the fact that $16 \nmid c(bz-cy)$.
\item \textbf{$a$ odd and $x$ even.} We see that $y,z$ are odd and $bc$ is even. Then use (\ref{Jac3}) and the fact that $16 \nmid 4a(cx-az)$.
\item \textbf{$a$ even and $x$ odd.} We see that $b,c$ are odd and $yz$ is even. Then use (\ref{Jac1}) and the fact that \\ $16 \nmid 4x(az-cx)$.
\item \textbf{$a,x$ both odd.} Then $a^2 \equiv x^2 \equiv 1 \mod 4$. From the way we defined $\epsilon$, we have that $p \equiv -\epsilon \mod 4$ (when neither is equal to 2). By symmetry, suppose $p \equiv 1 \mod 4$. Then we see that we must have $yz \equiv 2 \mod 4$. By symmetry, suppose $y$ is odd and $z$ is even but not divisible by 4. Then since $bz+cy$ is even, we must have that $c$ is even. If we then have $b$ even, use (\ref{Jac4}), and $16 \nmid 4a(bx-ay)$. If instead $b$ is odd, then because $4 \nmid z$ we know $bz \equiv 2 \mod 4$. But $a$ and $x$ are odd, so $4 \nmid 2ax$ and therefore $4 \nmid bz+cy$. It follows that $c$ is divisible by 4. Then we use (\ref{Jac3}), where $4a(cx-az)$ is divisible by 8 but not by 16. The other cases are symmetric.
\end{enumerate}

Finally we consider the case when either $p$ or $\epsilon$ is 2 (we cannot have both). Firstly, consider $\epsilon = 2$, so $p \equiv 5 \mod 8$.

\begin{enumerate}[label=(\roman*)]
\item \textbf{$a,x$ both even.} We see that $y,z$ are odd. Since $a^2+bc=-2 \mod 128$ and $a$ is even, exactly one of $b,c$ is even. But then $2 \nmid bz+cy$ is a contradiction.
\item \textbf{$a$ odd and $x$ even.} We see that $b,c,y,z$ are all odd. We also have from $2ax \mid bz+cy$ that $4 \mid bz+cy$, and therefore $4 \nmid bz-cy$. Then use (\ref{Jac5}) and the fact that $16 \nmid c(bz-cy)$.
\item \textbf{$a$ even and $x$ odd.} We see that $bc$ and $yz$ are even. Because $a^2+bc=-2 \mod 128$ and $a$ is even, exactly one of $b,c$ is even. Similarly, $x^2+yz \equiv -1 \mod 4$ and $x$ is odd, so exactly one of $y,z$ is even. We also know that $bz+cy$ is even. Thus either $b,y$ are even and $c,z$ are odd, or $b,y$ are odd and $c,z$ are even. In the first case use (\ref{Jac1}) and in the second case use (\ref{Jac2}).
\item \textbf{$a,x$ both odd.} We see that $b,c$ are odd and $yz$ is even. Again we have that exactly one of $y,z$ is even. If $y$ is even use (\ref{Jac2}) and if $z$ is even use (\ref{Jac1}).
\end{enumerate}

The only property of $p$ that we use is that $p \equiv 1 \mod 4$. If we took $p=2$, then $\epsilon =1$ is also 1 mod 4. So this case is symmetric to the above.
\end{proof}

The upshot of all this work is the following:

\begin{cor}
Let $\ell$ be any prime, and let $n_\ell $ be any integer at least 2, where we also ask that $n_2 \geq 7$. Suppose we can find trace-free matrices $A_0, B_0 \in \mathrm{M}_2(\mathbb Z/ \ell^{n_\ell } \mathbb Z)$ satisfying
$$A_0^2 = -\epsilon,\hspace{2em} B_0^2 = -p, \hspace{2em}A_0B_0=-B_0A_0 \mod \ell^{n_\ell }.$$
Then these lift to matrices $A,B \in \mathrm{M}_2(\mathbb Z_\ell )$ with $A \equiv A_0 \mod \ell^{n_\ell }$ and $B \equiv B_0 \mod \ell^{n_\ell }$ such that 
$$A^2 = -\epsilon,\hspace{2em} B^2 = -p, \hspace{2em}AB=-BA .$$
\end{cor}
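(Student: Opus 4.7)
The proof is really an assembly of the two technical lifting results already established: Lemma \ref{lift matrices} for odd primes, and the Hensel-type lemma (via Theorem \ref{Hensel}) for $\ell = 2$. The plan is to split into cases based on $\ell$ and in each case invoke the relevant preceding result, with the hypotheses on $n_\ell$ chosen precisely so that those results apply.

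For odd $\ell$: the hypothesis $n_\ell \geq 2$ is exactly the condition $m \geq 2$ required by Lemma \ref{lift matrices}. I would apply that lemma iteratively. Starting from the given $(A_0, B_0)$ mod $\ell^{n_\ell}$, the lemma produces trace-free $(A_1, B_1)$ mod $\ell^{n_\ell + 1}$ satisfying the three defining relations, and agreeing with $(A_0, B_0)$ mod $\ell^{n_\ell}$. Iterating with $m$ replaced by $n_\ell + k$ at each step yields a compatible sequence of lifts $(A_k, B_k) \in \mathrm{M}_2(\mathbb Z/\ell^{n_\ell + k}\mathbb Z)$. By the $\ell$-adic completeness of $\mathbb Z_\ell$ (i.e.\ the isomorphism $\mathbb Z_\ell \cong \varprojlim_k \mathbb Z/\ell^{n_\ell + k}\mathbb Z$), these assemble entry-by-entry into matrices $A, B \in \mathrm{M}_2(\mathbb Z_\ell)$, trace-free since each $A_k, B_k$ is, satisfying the exact relations and congruent to $A_0, B_0$ mod $\ell^{n_\ell}$ as required.

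For $\ell = 2$: here $n_2 \geq 7$ ensures we have enough precision to invoke the Hensel-type lemma proved via Theorem \ref{Hensel}. Writing $A_0 = \begin{psmallmatrix} a & b \\ c & -a \end{psmallmatrix}$ and $B_0 = \begin{psmallmatrix} x & y \\ z & -x \end{psmallmatrix}$ with entries interpreted as integers, the hypothesis on $A_0, B_0$ gives that $a,b,c,x,y,z$ satisfy equation (\ref{defining2}) modulo $128$. The Hensel-type lemma then produces $a_1, a_2, a_3, b_1, b_2, b_3 \in \mathbb Z_2$ congruent to $a,b,c,x,y,z$ mod $128$ and satisfying equation (\ref{defining}) exactly; assembling these into trace-free matrices $A, B \in \mathrm{M}_2(\mathbb Z_2)$ gives the required lift.

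The main subtlety is the $\ell = 2$ case, because the Hensel-type lemma was stated only producing congruence mod $128$, whereas the corollary asks for congruence mod $2^{n_2}$. For $n_2 = 7$ these coincide, and for $n_2 > 7$ one either interprets the congruence as mod $2^7$ (the effective precision of the Hensel argument) or, to get the full statement, argues that once one has found an honest solution $(A, B) \in \mathrm{M}_2(\mathbb Z_2)$ agreeing with $(A_0, B_0)$ mod $128$, the Newton iteration underlying Theorem \ref{Hensel} has quadratic convergence, so the discrepancy between $A$ and $A_0$ is in fact of $2$-adic valuation at least $n_2$. The remaining verification — that the trace-free shape is preserved throughout both the iterative and the Hensel arguments — is immediate from the fact that the defining equations only involve the trace-free entries.
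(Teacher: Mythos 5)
Your proposal matches the paper's implicit proof exactly: the corollary is stated without argument precisely because it is the immediate conjunction of Lemma \ref{lift matrices} (iterated for odd $\ell$) and the Hensel-type lemma that precedes it (for $\ell=2$), with $n_\ell \geq 2$ and $n_2 \geq 7$ being exactly the hypotheses those two results require. You are also right to flag that the $\ell=2$ lemma as stated yields congruence only modulo $128$; a cleaner way to finish than appealing informally to Newton quadratic convergence is to observe that the explicit estimate in Conrad's Theorem \ref{Hensel} gives $\norm{\boldsymbol\alpha - \mathbf a} \leq \norm{\mathbf f(\mathbf a)}/|J_{\mathbf f}(\mathbf a)|$, so a solution mod $2^{n_2}$ with $v_2(J) \leq 3$ lifts to one agreeing modulo at least $2^{n_2 - 3}$, which (since $n_2 \geq 7$ and the basis of $\mathcal O$ imposes conditions only modulo $4$) is all the congruence the surrounding algorithm actually uses.
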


So suppose we can find for $n_\ell  \geq 2$, or $n_2 \geq 7$ (for example by exhaustion of finitely many matrices in $\mathrm{M}_2(\mathbb Z/\ell^{n_\ell }\mathbb Z)$, or working through the Hensel argument for $\ell \neq 2$) such matrices $A_0$ and $B_0$ which satisfy the congruence conditions determined by the basis of $\mathcal O$. In other words, if we mapped $i \mapsto A_0$ and $j \mapsto B_0$, then the induced map on $s^1,s^2,s^3,s^4$ sends them to well defined matrices modulo $\ell^{n_\ell -2}$ (the 2 accounts for denominators when writing the $s^1,s^2,s^3,s^4$ in terms of $i,j$ - see below). Then $A_0$ and $B_0$ lift to $A,B \in \mathrm{M}_2(\mathbb Z_\ell )$ such that the map determined by $i \mapsto A$ and $j \mapsto B$ gives an isomorphism $\mathcal O \otimes \mathbb Z_\ell  \cong \mathrm{M}_2(\mathbb Z_\ell )$.

\begin{rem}
The following Proposition in Pizer's paper \cite{Piz1} (Proposition 5.2) gives explicit $\mathbb Z$-bases for a maximal order of our quaternion algebra $D$. We see in particular that the only primes dividing a denominator are 2 and $r$ (when $p \equiv 1 \mod 8$ and $\epsilon = r$), with power at most $2^2$ and $r$.

\begin{prop}\label{pizer basis}
A maximal order of $D$ is given by the $\mathbb Z$-basis:
\begin{align*}
&\frac{1}{2}(1+i+j+ij),i,j,ij  &\text{ if } p = 2 \\
&\frac{1}{2}(1+j), \frac{1}{2}(i+ij),j,ij &\text{ if } p \equiv 3 \mod 4 \\
&\frac{1}{2}(1+j+ij), \frac{1}{4}(i+2j+ij),j,ij &\text{ if } p \equiv 5 \mod 8 \\
&\frac{1}{2}(1+j), \frac{1}{2}(i+ij), \frac{1}{r}(j+a \cdot ij),ij &\text{ if } p \equiv 1 \mod 8
\end{align*}
where $a$ is some integer such that $r \mid a^2p+1$.
\end{prop}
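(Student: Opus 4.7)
The strategy would be to handle each of the four cases separately with the same two-step procedure: first verify that the proposed $\mathbb{Z}$-module $\mathcal{O}$ is an order in $D$ (closed under multiplication, contains $1$, spans $D$ over $\mathbb{Q}$), and then verify it is maximal by a reduced discriminant count.

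For the first step, one writes out the multiplication table of the listed basis $\{e_1, e_2, e_3, e_4\}$ using the defining relations $i^2 = -\epsilon$, $j^2 = -p$, $ij = -ji$, and checks that each product $e_m e_n$ expands as an integer $\mathbb{Z}$-combination of the $e_k$. In cases 3 and 4 the congruence conditions ($p \equiv 5 \pmod 8$, respectively $r \mid a^2 p + 1$) are precisely what make the denominators cancel in the critical products, such as the square of $\frac{1}{4}(i + 2j + ij)$ in case 3 and products involving $\frac{1}{r}(j + a \cdot ij)$ in case 4. That $1 \in \mathcal{O}$ is immediate by inspection.

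For maximality, the plan is to use the fact that $D$ ramifies exactly at $\{p,\infty\}$, so $\mathrm{disc}(D) = p$ and maximal orders of $D$ are precisely those of reduced discriminant $p$. If $M$ denotes the change of basis matrix from the given $\{e_1, \ldots, e_4\}$ to $\{1, i, j, ij\}$, then since the reduced trace pairing $(x,y) \mapsto \mathrm{trd}(xy)$ gives the diagonal Gram matrix $\mathrm{diag}(2, -2\epsilon, -2p, -2\epsilon p)$ on $\{1, i, j, ij\}$, the order $\mathbb{Z}\langle 1, i, j, ij\rangle$ has reduced discriminant $4\epsilon p$, and hence $d(\mathcal{O}) = 4\epsilon p \cdot |\det M|$. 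A direct computation of $\det M$ in each case (yielding $1/4$ in case 2, $1/8$ in case 3 with $\epsilon = 2$, and $1/(4r)$ in case 4 with $\epsilon = r$) gives $d(\mathcal{O}) = p$, proving maximality.

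The main obstacle will be case 4, where the divisibility $r \mid a^2 p + 1$ must be exploited carefully in verifying integrality of the products involving $\frac{1}{r}(j + a \cdot ij)$; this is the step where one actually sees the arithmetic condition on $a$ earn its keep. A cleaner alternative, which one might prefer to avoid the heaviest multiplication-table calculations, is to check maximality locally: for $\ell \nmid 2\epsilon p$ the localisation $\mathcal{O} \otimes \mathbb{Z}_\ell$ coincides with $\mathbb{Z}_\ell\langle 1, i, j, ij\rangle$ and is automatically maximal in $\mathrm{M}_2(\mathbb{Q}_\ell)$, while at the finitely many remaining primes one either exhibits an isomorphism $\mathcal{O} \otimes \mathbb{Z}_\ell \cong \mathrm{M}_2(\mathbb{Z}_\ell)$ (using the techniques of Section 3 for $\ell \neq p$) or identifies $\mathcal{O} \otimes \mathbb{Z}_p$ with the unique maximal order of the division algebra $D_p$.
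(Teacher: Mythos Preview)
The paper does not supply its own proof of this proposition; it is quoted from Pizer \cite{Piz1} (Proposition 5.2) and used as a black box for the algorithm. Your two-step plan---check closure under multiplication to confirm the lattice is an order, then compute the reduced discriminant and match it to $\mathrm{disc}(D)=p$---is the standard route and is correct; your index computations in cases 2, 3, 4 are right and give $d(\mathcal O)=p$.

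One point worth flagging: you omit case $p=2$ from your explicit determinant list, and if you run your formula there with $\epsilon=1$, $p=2$, $|\det M|=1/2$ you obtain $4\epsilon p\cdot|\det M|=4\neq 2$; indeed with $j^2=-2$ one computes $\bigl(\tfrac{1}{2}(1+i+j+ij)\bigr)^2 = \tfrac{1}{2}(1+i+j+ij) - \tfrac{3}{2}$, so the listed lattice is not even an order. The resolution is that Pizer's original statement for $p=2$ uses the presentation $\left(\frac{-1,-1}{\mathbb Q}\right)$ (so $j^2=-1$), for which the listed basis is the Hurwitz order and both steps go through. The paper's uniform notation $D=\left(\frac{-\epsilon,-p}{\mathbb Q}\right)$ slightly obscures this special case; with that adjustment your method handles all four cases cleanly.
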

This means we can explicitly write down the congruences we want our matrices $A_0$ and $B_0$ in the above Corollary to satisfy. We know that these matrices exist for $\ell \neq p$ by the ramification properties of the quaternion algebra $D$.
\begin{condition}\label{hensel condition}
We want to find matrices $A_0, B_0 \in \mathrm{M}_2(\mathbb Z / \ell^{n_\ell} \mathbb Z)$, for $n_\ell \geq 2$ and $n_2 \geq 7$ satisfying 
$$A_0^2 = -\epsilon,\hspace{2em} B_0^2 = -p, \hspace{2em}A_0B_0=-B_0A_0 \mod \ell^{n_\ell },$$
and also
\begin{align*}
&2 \mid 1+A_0+B_0+A_0B_0			&\text{ if } p = 2 \\
&2 \mid 1+B_0, \hspace{1em} 2 \mid A_0+A_0B_0				&\text{ if } p \equiv 3 \mod 4 \\
&2 \mid 1+B_0+A_0B_0, \hspace{1em} 4 \mid A_0+2B_0+A_0B_0				 &\text{ if } p \equiv 5 \mod 8 \\
&2 \mid 1+B_0, \hspace{1em} 2 \mid A_0+A_0B_0, \hspace{1em} r \mid B_0+a\cdot A_0B_0				&\text{ if } p \equiv 1 \mod 8
\end{align*}
where we remind ourselves that when $p \equiv 1 \mod 8$, we need $\epsilon = r$ is a prime congruent to $3 \mod 4$ with $\left(\frac{r}{p}\right)=-1$, and $a$ is some integer such that $r \mid a^2p+1$, which we choose in writing down the basis in Proposition \ref{pizer basis}. Note that these last four congruences are vacuous unless $\ell = 2$ or $r$.
\end{condition}
\end{rem}

\section{Weight 0 mod $p^2 - 1$ and level 1, for general $p$}

We now work with any prime $p$. Let $D$ be the unique quaternion algebra over $\mathbb Q$ ramified exactly at $\{p,\infty\}$, $\mathcal O$ a maximal order, and let $\Omega := U \backslash D^\times (\mathbb A_f) / D^\times (\mathbb Q)$, where $U := \mathcal O_p^\times \times \prod\limits_{l \neq p} \mathcal O_\ell ^\times$. We will compute the Hecke operator $T_{\ell_0  }$ on $\Omega$, which will give us the weight $0 \mod p^2-1$ eigenvalues on $\Omega(1)$. The argument in the example above largely generalises to this case. Recall the definition of $T_{\ell_0}$.

\begin{defn}
For a prime $\ell_0 \neq p$, the Hecke operator $T_{\ell_0  }$ on the space of functions $\Omega \to \overline{\mathbb F}_p$ is given by $$T_{\ell_0  } f([x_\ell ]) = \ell_0  ^{-1}\sum\limits_i f(g_i \cdot [x_{\ell  }])$$
for $\mathrm{GL}_2(\mathbb Z_{\ell_0  }) \begin{psmallmatrix} 1 & 0 \\ 0 & \ell_0   \end{psmallmatrix} \mathrm{GL}_2(\mathbb Z_{\ell_0  }) = \bigsqcup \mathrm{GL}_2(\mathbb Z_{\ell_0  })g_i$. Recall that $g_i \cdot [x_{\ell_0}]$ means we pick a representative $(x_\ell) \in D^\times (\mathbb A_f)$ of $[x_\ell]$, multiply this in the $\ell_0$-place by the matrix $g_i$ (under an identification of $\mathcal O_{\ell_0} \cong \mathrm{M}_2(\mathbb Z_{\ell_0})$ and hence of $D_{\ell_0} \cong \mathrm{M}_2(\mathbb Q_{\ell_0})$), and then take the image in $\Omega$.
\end{defn}

We explain part of this using the matrices of the previous section. Suppose we had some $w^i=[w_2^i, w_3^i, \dots] \in \Omega$ and $w^j=[w_2^j,w_3^j,\dots] \in \Omega$ corresponding to left ideal classes of $\mathcal O$ (so almost all $w_\ell^i,w_\ell^j$ can be taken to be 1), and we wanted to know if $g_k \cdot w^j = w^i \in \Omega$, for some $g_k$ appearing in $T_{\ell_0}$. Denote by $\mathcal J$ the left $\mathcal O$-ideal with local generators $\{w_2^j, w_3^j, \dots, g_k \cdot w_{\ell_0  }^j, \dots\}$, where we view $g_k \in \mathrm{M}_2(\mathbb Z_{\ell_0  })$ as an element of $\mathcal O_{\ell_0}$. Denote by $\mathcal I$ the left $\mathcal O$-ideal with local generators $\{w_2^i, w_3^i, \dots\}$. Then we need to check whether there exists $\alpha \in D^\times (\mathbb Q)$ such that $\mathcal J = \mathcal I \alpha$ (the question of determining when two $\mathcal O$-ideals are in the same ideal class, for $\mathcal O$ an Eichler order, has been studied in \cite{MR2967473}, but in our case we have local generators for the ideals, meaning that a basis for an ideal is not obvious). This means that for all $\ell \neq \ell_0  $, we require 
\begin{equation}\label{maineq1}w_\ell ^i \alpha (w_\ell ^j)^{-1} \in \mathcal O_\ell^\times\end{equation}
and also
\begin{equation}\label{maineq2}w_{\ell_0  }^i \alpha (w_{\ell_0  }^j)^{-1} g_k^{-1} \in \mathcal O_{\ell_0}^\times.\end{equation}
We deduce the following conditions on $\alpha$: let $\mathcal V_{i,j}$ be the set of primes $\ell$ at which at least one of $w_\ell ^i$ and $w_\ell ^j$ is not 1, throwing out $p$ and $\ell_0  $. For all $\ell$, we temporarily define $m_\ell = \mathrm{max}(\mathrm{nrd}(w_\ell^i), \mathrm{nrd}(w_\ell^j))$. Then for all $\ell \not\in \mathcal V_{i,j} \cup \{p,\ell_0  \}$, equation (\ref{maineq1}) is equivalent to 
\begin{equation}\label{cond1}\alpha \in \mathcal O_\ell^\times \hspace{2em} \ell \not\in \mathcal V_{i,j} \cup \{p,\ell_0  \}. \end{equation}
For $\ell =p$, because $D$ is ramified at $p$, equation (\ref{maineq1}) is equivalent to 
\begin{equation}\label{cond2}v_p(\mathrm{nrd}(\alpha)) = v_p(\mathrm{nrd}(w_\ell ^j))-v_p(\mathrm{nrd}(w_\ell ^i)),\end{equation}
where $v_p$ denotes the usual $p$-adic valuation on $\mathbb Z$. For $\ell \in \mathcal V_{i,j}$, equation (\ref{maineq1}) tells us that 
\begin{equation}\label{cond3}v_\ell (\mathrm{nrd}(\alpha)) = v_\ell (\mathrm{nrd}(w_\ell ^j))-v_\ell (\mathrm{nrd}(w_\ell ^i)) \hspace{2em} \ell \in \mathcal V_{i,j}.\end{equation}
Additionally, since $\alpha \in (w_\ell ^i)^{-1} \cdot \mathcal O_\ell^\times \cdot w_\ell ^j$, and because $\mathrm{nrd}(w_\ell ^i)$ has $\ell$-adic valuation at most $m_\ell $ by definition, we see that 
\begin{equation}\label{cond4} \ell^{m_\ell } \alpha \in \mathcal O_\ell  \cong \mathrm{M}_2(\mathbb Z_\ell ) \hspace{2em} \ell \in \mathcal V_{i,j}.\end{equation}
A similar argument using equation (\ref{maineq2}) shows that 
\begin{equation}\label{cond5}v_{\ell_0  }(\mathrm{nrd}(\alpha)) = v_{\ell_0  }(\mathrm{nrd}(w_{\ell_0  }^j))-v_\ell (\mathrm{nrd}(w_{\ell_0  }^i)) + 1\end{equation}
and
\begin{equation}\label{cond6} \ell_0  ^{m_{\ell_0  }+1} \alpha \in \mathcal O_{\ell_0} \cong \mathrm{M}_2(\mathbb Z_{\ell_0  }).\end{equation}

Combining equations (\ref{cond1}) to (\ref{cond6}), we see that we can compute rational numbers $$M = \ell_0  \prod\limits_{\ell \in \mathcal V_{i,j} \cup \{p,\ell_0\}} \ell^{m_\ell }$$ and $$K = \ell_0  \prod\limits_{\ell \in \mathcal V_{i,j} \cup \{p,\ell_0\}} \ell^{v_\ell (\mathrm{nrd}(w_\ell ^j)) - v_\ell (\mathrm{nrd}(w_\ell ^i))}$$ such that 
\begin{equation*}\label{Cond} M\alpha \in \mathcal O \hspace{2em} \text{and} \hspace{2em} \mathrm{nrd}(\alpha) = K.\end{equation*}

Note that we can rewrite, for $\ell \neq p, \ell_0  $, the condition $w_\ell ^i \alpha (w_\ell ^j)^{-1} \in \mathcal O_\ell$ (being a unit is then guaranteed by the norm condition) as saying 
$$w_\ell ^i \cdot M\alpha \cdot \overline{w_\ell ^j} \in \ell^{m_\ell  + v_\ell (\mathrm{nrd}(w_\ell ^j))} \mathcal O_\ell ,$$
where $\overline{w_\ell ^j}$ denotes the standard involution in $D \otimes \mathbb Q_\ell $. We can do the same for $\ell_0$, using $\overline{g_k} = \mathrm{adj}(g_k)$ the adjugate matrix of $g_k$, which gives an extra factor of $\ell_0$. Since $m_\ell  + v_\ell (\mathrm{nrd}(w_\ell ^j)) \leq 2m_\ell $, to check this condition we only really need $w_\ell ^i$, $M\alpha$ and $w_\ell ^j$ modulo $\ell^{2m_\ell }$. Similarly, we only really need $w_{\ell_0  }^i$, $M\alpha$ and $w_{\ell_0  }^j$ modulo $\ell_0  ^{2m_{\ell_0  }+2}$. This means we can make use of the matrices computed in the previous section to rephrase our conditions on $\alpha$. Let $s^1,s^2,s^3,s^4$ be an integer basis for $\mathcal O$, taking for instance the basis of Proposition \ref{pizer basis}. Since $M\alpha \in \mathcal O$, we can write 
$$M\alpha = t\cdot s^1+x\cdot s^2 + y\cdot s^3 + z \cdot s^4$$ for variables $t,x,y,z$ which are to take values in $\mathbb Z$. Let $\ell$ be a prime in $\mathcal V_{i,j} \cup \{\ell_0  \}$. We can identify $M\alpha \mod \ell^{2m_\ell +2}$ with a matrix $\begin{psmallmatrix} a_\ell & b_\ell  \\ c_\ell  &-a_\ell \end{psmallmatrix} \in \mathrm{M}_2(\mathbb Z/ \ell^{2m_\ell +2} \mathbb Z)$, where $a_\ell ,b_\ell ,c_\ell $ are $(\mathbb Z/ \ell^{2m_\ell +2} \mathbb Z)$-linear functions in $t,x,y,z$ which can be computed. We can also, for any $\ell \neq p$, identify $w_\ell^i$ and $w_\ell^j$ with matrices $W_\ell^i,W_\ell^j \in \mathrm{M}_2(\mathbb Z/ \ell^{2m_\ell +2}\mathbb Z)$.

 We have that $g_k \cdot w^j = w^i \in \Omega$ if and only if we can find some $t,x,y,z \in \mathbb Z$ such that the following conditions hold:
\begin{equation}\label{maincond}
\begin{dcases*}
\mathrm{nrd}(M\alpha) = KM^2 \\
W_\ell ^i \cdot \begin{pmatrix} a_\ell & b_\ell  \\ c_\ell  &-a_\ell \end{pmatrix} \cdot\mathrm{adj}(W_\ell ^j) \in \ell^{m_\ell  + v_\ell (\det(W_\ell ^j))} \cdot \mathrm{M}_2(\mathbb Z/ \ell^{2m_\ell +2} \mathbb Z) \hspace{3em} \text{ for $\ell \in \mathcal V_{i,j}$} \\
W_{\ell_0  }^i \cdot \begin{pmatrix} a_{\ell_0  }& b_{\ell_0  } \\ c_{\ell_0  } &-a_{\ell_0  }\end{pmatrix} \cdot \mathrm{adj}(W_{\ell_0  }^j) \cdot \mathrm{adj}(g_k)
 \in \ell_0  ^{m_{\ell_0  } + v_{\ell_0  }(\det(W_{\ell_0  }^j))+2} \cdot \mathrm{M}_2(\mathbb Z/ \ell_0  ^{2m_{\ell_0} +2} \mathbb Z).
\end{dcases*}
\end{equation}

The first condition tells us that $t,x,y,z \in \mathbb Z$ satisfy some quadratic equation. Because the reduced norm is a positive definite quadratic form in the $t,x,y,z$ (due to ramification at $\infty$), this means that there are only finitely many solutions to the quadratic equation. We can then enumerate them (for example, diagonalising the quadratic form, computing the finitely many solutions with the new basis, and then solving for $t,x,y,z$). Once we do so, it remains to check whether they satisfy the last two conditions of (\ref{maincond}). By expanding them out, these conditions can be interpreted as congruence conditions on $t,x,y,z$ modulo $\ell^{2m_\ell +2}$ for $\ell \in \mathcal V_{i,j} \cup \{\ell_0  \}$, or more precisely modulo $ \ell^{m_\ell  + v_\ell (\det(W_\ell ^j))}$ and $\ell_0  ^{m_{\ell_0  } + v_{\ell_0  }(\det(W_{\ell_0  }^j))+2}$. This allows us to determine whether $g_k \cdot w^j = w^i \in \Omega$, and hence compute a matrix representative for the Hecke operator $T_{\ell_0}$. We are now ready to present the algorithm. This should be read in conjunction with the following Table of Notation. The column for 'Corresponding Matrices' refers to matrices generated using the methods of Section 3.

\newpage
\begin{center}
\textbf{Table of Notation}
\end{center}
\begin{center}

\binoppenalty=9999
\relpenalty = 9999
	\begin{tabular}{|V{16em} |V{23em}|V{9em}|}
	\hline
\textbf{Notation} & \textbf{Definition} & \textbf{Corresponding Matrices} \\ \hline
$D = \left(\frac{-\epsilon,-p}{\mathbb Q}\right)$ & Quaternion algebra ramified at $\{p,\infty\}$. & \\ \hline
$\mathcal O$ & A maximal order of $D$. & \\ \hline
$I_1,\dots, I_h$ & Representatives of the left ideal classes of $\mathcal O$. & \\ \hline
$\mathcal B_1,\dots, \mathcal B_h$ & $\mathbb Z$-bases for $I_1,\dots,I_h$. & \\ \hline
$\mathcal V$ & The set of primes $\ell$ for which in some $\mathcal B_j$ all elements have reduced norm divisible by $\ell$, excluding $p$ and including $\ell_0  $. These are the primes at which we need to compute matrices. & \\ \hline
$i,j$ & Generators for $D$. & $A_\ell ,B_\ell  \in \mathrm{M}_2(\mathbb Z/\ell^{n_\ell }\mathbb Z)$ for each $\ell \in \mathcal V$.  \\ \hline
$\{s^1,s^2,s^3,s^4\}$ & A $\mathbb Z$-basis for $\mathcal O$. & $S^1_\ell ,S_\ell ^2,S_\ell ^3,S_\ell ^4 \in \mathrm{M}_2(\mathbb Z/\ell^{2m_\ell +2} \mathbb Z)$ for each $\ell \in \mathcal V$.  \\ \hline
$w^j=[w_2^j,w_3^j,w_5^j, \dots] \in \Omega$ & The elements of $\Omega$ corresponding to the $I_j$. The square brackets means the double coset represented by the adelic point $(w_2^j,w_3^j,\dots)$. From the way we compute this, almost all $w_\ell ^j$ will be 1. & $W_\ell ^j \in \mathrm{M}_2(\mathbb Z/ \ell^{2m_\ell +2} \mathbb Z)$ corresponding to the $w_\ell ^j$ for each $\ell \in \mathcal V$.  \\ \hline
$m_\ell $, $n_\ell =2m_\ell +4$ for $\ell \neq 2$, and $n_2 = \max(7,2m_2 + 4)$ & $m_\ell := \max\limits_j(v_\ell(\mathrm{nrd}(w_\ell^j)))$. & \\ \hline
$g_0,\dots,g_{\ell_0  } \in \mathrm{M}_2(\mathbb Z_{\ell_0  })$ & The matrices  $\begin{psmallmatrix} 1 & 0 \\ 0 & \ell_0   \end{psmallmatrix}, \begin{psmallmatrix} 1 & 1 \\ 0 & \ell_0   \end{psmallmatrix}, \dots, $ $\begin{psmallmatrix} 1 & \ell_0  -1 \\ 0 & \ell_0   \end{psmallmatrix}, \begin{psmallmatrix} \ell_0   & 0 \\ 0 & 1 \end{psmallmatrix}$. & \\ \hline
$\mathbbm{1}_{w^j}$ & The characteristic function $\Omega \to \overline{\mathbb F}_p$ of the point $w^j \in \Omega$. & \\ \hline
$e_{i,j,k} \in \{0,1\}$ & $e_{i,j,k} = \mathbbm{1}_{w^i}(g_k \cdot w^j)$ for $1 \leq i,j \leq h$ and $0 \leq k \leq \ell_0  $. The multiplication $g_k \cdot w^j$ means $[w_2^j,w_3^j, \dots, g_k \cdot w_{\ell_0} ^j, \dots]$, with multiplication occuring only in the $\ell_0$-th place. & \\ \hline
$\mathcal V_{i,j} \subset \mathcal V$ & The set of primes $\ell$ such that at least one of $w_\ell ^i$ and $w_\ell ^j$ is not 1, excluding $p$ \textbf{and} $\ell_0  $. & \\ \hline
	\end{tabular}
\end{center}

\newpage

\begin{algo}\label{algo}
Input: distinct primes $p$ and $\ell_0$.

Output: a matrix representing the action of the Hecke operator $T_{\ell_0}$ on the space of all functions $\Omega \to \overline{\mathbb F}_p$.
\begin{enumerate}[label=(\roman*)]
\item Define a quaternion algebra $D = \left(\frac{-\epsilon,-p}{\mathbb Q}\right)$ over $\mathbb Q$, ramified exactly at $\{p,\infty\}$. Define a maximal order $\mathcal O$ with integer basis given as in Proposition \ref{pizer basis}, for which we denote the basis elements $\{s^1,s^2,s^3,s^4\}$. Compute the left ideal classes $I_1, \dots, I_h$ of $\mathcal O$, and bases $\mathcal B_1, \dots, \mathcal B_h$ for them.

\item\label{step0} Compute the points $w^j = [w_2^j, w_3^j, w_5^j, \dots] \in \Omega$ corresponding to $I_j$ for each $j$ as follows. We take $w_\ell ^j$ to be any generator of $I_j \otimes \mathbb Z_\ell $ (which we know is principal). To do this, for our basis $\mathcal B_j$, compute the reduced norm of each of the four elements and set $w_\ell ^j$ to be any of these elements whose reduced norm has minimal $\ell$-adic valuation. Note that for almost all $\ell$ this valuation is zero, so we can instead take $w_\ell ^j=1$, and do so when possible.

\item\label{step1} Determine the set $\mathcal V$, defined in the Table of Notation. For each $\ell \in \mathcal V \cup \{p\}$, compute $m_\ell$ and $n_\ell$. For each $\ell \in \mathcal V$, compute matrices $A_\ell, B_\ell \in \mathrm{M}_2(\mathbb Z/ \ell^{n_\ell} \mathbb Z)$ satisfying Condition \ref{hensel condition}. Using these, compute matrices corresponding to the $s^1,s^2,s^3,s^4$, which in any case are well defined modulo $\ell^{2m_\ell+2}$. Denote these by $S_\ell^i \in \mathrm{M}_2(\mathbb Z/ \ell^{2m_\ell + 2} \mathbb Z)$ for each $\ell \in \mathcal V$. By expressing each $w_\ell^j \in \mathcal B_j$ as a $\mathbb Z$-linear combination of $s^1,s^2,s^3,s^4$, we can compute matrices $W_\ell ^j \in  \mathrm{M}_2(\mathbb Z/\ell^{2m_\ell +2}\mathbb Z )$ corresponding to the $w_\ell ^j$.

\begin{rem}
So far we have only mentioned $\ell_0  $ as that it needed to be added to $\mathcal V$. If one wanted to compute several Hecke operators at once, they could add all the primes for the operators into $\mathcal V$. Then in the above steps we computed all the relevant matrices, to save us repeating the calculations if we were to compute the Hecke operators one by one. 
\end{rem}

\item\label{step2} Let $\mathbbm{1}_{w^1},\dots, \mathbbm{1}_{w^h}$ be the characteristic functions of the points $w^1,\dots,w^h \in \Omega$. This is a basis for the vector space of $\overline{\mathbb F}_p$-valued functions on $\Omega$. Let $g_0, \dots, g_{\ell_0  -1},g_{\ell_0  } \in \mathrm{M}_2(\mathbb Z_{\ell_0  })$ be the matrices \\$\begin{psmallmatrix} 1 & 0 \\ 0 & \ell_0   \end{psmallmatrix}, \begin{psmallmatrix} 1 & 1 \\ 0 & \ell_0   \end{psmallmatrix}, \dots, \begin{psmallmatrix} 1 & \ell_0  -1 \\ 0 & \ell_0   \end{psmallmatrix}, \begin{psmallmatrix} \ell_0   & 0 \\ 0 & 1 \end{psmallmatrix}$. Define the quantity $e_{i,j,k}$ for $1 \leq i,j \leq h$ and $0 \leq k \leq \ell_0  $ by
$$e_{i,j,k} = \mathbbm{1}_{w^i}(g_k \cdot w^j).$$ Then by definition we have the formula
$$T_{\ell_0  }(\mathbbm{1}_{w^i})(w^j) = \frac{1}{\ell_0  } \cdot \sum\limits_{k=0}^{\ell_0  } e_{i,j,k}.$$
This is then the $(j,i)$-th entry of the matrix for $T_{\ell_0  }$ with respect to the basis $\{\mathbbm{1}_{w^1},\dots,\mathbbm{1}_{w^h}\}$. Hence it remains to compute this quantity $e_{i,j,k}$.

\item Fix $i,j$. Determine the set $\mathcal V_{i,j}$. Compute the quantities
$$M = \ell_0  \prod\limits_{\ell \in \mathcal V_{i,j} \cup \{p,\ell_0\}} \ell^{m_\ell }$$ and $$K = \ell_0  \prod\limits_{\ell \in \mathcal V_{i,j} \cup \{p,\ell_0\}} \ell^{v_\ell (\mathrm{nrd}(w_\ell ^j)) - v_\ell (\mathrm{nrd}(w_\ell ^i))}.$$

\item Check if there exist integers $t,x,y,z \in \mathbb Z$ such that the following conditions hold:
\begin{equation*}
\begin{dcases*}
\mathrm{nrd}(t\cdot s^1+x\cdot s^2 + y\cdot s^3 + z \cdot s^4) = KM^2 \\
W_\ell ^i \cdot \begin{pmatrix} a_\ell & b_\ell  \\ c_\ell  &-a_\ell \end{pmatrix} \cdot\mathrm{adj}(W_\ell ^j) \in \ell^{m_\ell  + v_\ell (\det(W_\ell ^j))} \cdot \mathrm{M}_2(\mathbb Z/ \ell^{2m_\ell +2} \mathbb Z) \hspace{3em} \text{ for $\ell \in \mathcal V_{i,j}$} \\
W_{\ell_0  }^i \cdot \begin{pmatrix} a_{\ell_0  }& b_{\ell_0  } \\ c_{\ell_0  } &-a_{\ell_0  }\end{pmatrix} \cdot \mathrm{adj}(W_{\ell_0  }^j) \cdot \mathrm{adj}(g_k)
 \in \ell_0  ^{m_{\ell_0  } + v_{\ell_0  }(\det(W_{\ell_0  }^j))+2} \cdot \mathrm{M}_2(\mathbb Z/ \ell_0  ^{2m_{\ell_0} +2} \mathbb Z).
\end{dcases*}
\end{equation*}
where $\begin{psmallmatrix} a_\ell & b_\ell \\ c_\ell & -a_\ell \end{psmallmatrix} \in \mathrm{M}_2(\mathbb Z/ \ell  ^{2m_\ell +2} \mathbb Z)$ is the matrix $t\cdot S_\ell^1+x\cdot S_\ell^2 + y\cdot S_\ell^3 + z \cdot S_\ell^4$.
If such $t,x,y,z$ exist, set $e_{i,j,k}$ to be 1, and otherwise 0. Note that the only dependence on $k$ is in the last condition.

\end{enumerate}
\end{algo}

\section{Introducing weight and level}

We now discuss what modifications must be done when computing the Hecke eigenvalues mod $p$ of general weight and level on the quaternion side. In the previous section we checked whether two elements of $\Omega$ are the same by checking if they determine isomorphic invertible left $\mathcal O$-ideals. The point is that if we now attach $\pi N$-structure to our ideals, we need to determine whether this isomorphism sends one $\pi N$-structure to the other. Our aim is to compute the Hecke operators $T_{\ell_0  }$, for $\ell_0  \nmid pN$, on the space of functions $\Omega(N) = U(N) \backslash D^\times (\mathbb A_f)/D^\times (\mathbb Q) \to \overline{\mathbb F}_p$. Recall this was given by $$T_{\ell_0  } f([x_\ell ]) = \ell_0  ^{-1}\sum\limits_i f(g_i \cdot [x_{\ell  }])$$for $\mathrm{GL}_2(\mathbb Z_{\ell_0  }) \begin{psmallmatrix} 1 & 0 \\ 0 & \ell_0   \end{psmallmatrix} \mathrm{GL}_2(\mathbb Z_{\ell_0  }) = \bigsqcup \mathrm{GL}_2(\mathbb Z_{\ell_0  })g_i$. If we wanted to isolate the Hecke eigenvalues arising from weight $k \mod p^2-1$, then we need to restrict to the functions which satisfy $f(\mu \cdot [x_\ell ]) = \mu^{-k}f([x_\ell ])$, where $\mu \in \mathcal O_p^\times/ \mathcal O_p^\times(1) \cong \mathbb F_{p^2}^\times$ acts on $[x_\ell ]$ by multiplication in the $p$-place. In our case, we can identify $\mathcal O_p^\times/ \mathcal O_p^\times(1) \cong \mathbb F_{p^2}^\times$ with $\{s+ti \mid s,t \in \mathbb F_{p} \text{ not both zero}\}$, which is closed under multiplication, where $i$ and $j$ are the generators of $D$ ($j$ can be viewed as a uniformiser $\pi$ of $\mathcal O_p^\times$).

Firstly, we write down the elements of $\Omega(N)$. For $U$ defined as $\mathcal O_p^\times \times \prod\limits_{\ell \neq p} \mathcal O_\ell ^\times$ previously, we have $$U(N) \backslash U \cong \mathbb F_{p^2}^\times \times \prod\limits_{\ell \neq p} \mathrm{GL}_2(\mathbb Z/\ell^{v_\ell (N)}\mathbb Z) \cong \mathbb F_{p^2}^\times \times \mathrm{GL}_2(\mathbb Z/N\mathbb Z).$$
Hence, for our points $w^1,\dots, w^h$ of $\Omega$, we need to multiply on the left by representatives of $\mathbb F_{p^2}^\times$ and $\mathrm{GL}_2(\mathbb Z/N\mathbb Z)$ at the appropriate places, in order to determine all the elements of $\Omega(N)$. Ranging over each choice of $\mu \in \mathbb F_{p^2}^\times$ and $\gamma \in \mathrm{GL}_2(\mathbb Z/N\mathbb Z)$, the corresponding $(U(N),D^\times(\mathbb Q))$-double cosets cover $D^\times (\mathbb A_f)$, but need not be distinct. For example, for any triple $$\vec{j} = (j,\mu,\gamma) \in \{1,\dots,h\} \times \mathbb F_{p^2}^\times \times \mathrm{GL}_2(\mathbb Z/N\mathbb Z),$$ if we define $v^{\vec{j}}$ as in the Updated Table of Notation below, then $\vec{j} = (j,\mu,\gamma)$ and $\vec{j}' = (j,-\mu,-\gamma)$ define the same element $v^{\vec{j}} = v^{\vec{j}'}$ of $\Omega(N)$. This example comes from the fact that $-1 \in D^\times (\mathbb Q)$ and $-1 \in \mathcal O_\ell^\times$ (but $-1 \not\in \mathcal O_p^\times(1)$ and $-1 \not\in \mathcal O_\ell^\times(N)$ for $\ell \mid N$), so that
\begin{align*}
[w_2^j, w_3^j, \dots, \mu \cdot w_p^j, \dots, \gamma_\ell \cdot w_\ell^j , \dots] &= [-w_2^j, -w_3^j, \dots, -\mu \cdot w_p^j, \dots, -\gamma_\ell \cdot w_\ell^j , \dots] \\  &= [w_2^j, w_3^j, \dots, -\mu \cdot w_p^j, \dots, -\gamma_\ell \cdot w_\ell^j , \dots].
\end{align*}
When interpreting elements of $\Omega(N)$ in terms of isomorphism classes of invertible left $\mathcal O$-ideals with $\pi N$-structure, this phenomenon is due to automorphisms of the left $\mathcal O$-ideals, which then shift around the $\pi N$-structure; in our example we always have the automorphism given by multiplication by $-1$, which sends $\mu \mapsto -\mu$, $\gamma \mapsto -\gamma$. It is possible to determine these automorphisms. Recall that the points $w^1,\dots, w^h \in \Omega$ give local generators for representatives $I_1,\dots, I_h$ of the left ideal classes of $\mathcal O$. The automorphisms of $I_j$ as a left $\mathcal O$-ideal are precisely given by right multiplication by the units of the right order $O_R(I_j)$ of $I_j$, where we define this as in \cite{Voi1} to be:
$$O_R(I_j) := \{ \alpha \in D \mid I_j \alpha \subset I_j\}.$$
Note that $O_R(I_j)^\times$ will always be a finite set, once again because the quaternion algebra $D$ is ramified at infinity.

\begin{rem}
In Serre's letter \cite{Ser1}, the relationship between Hecke eigenvalues on the modular form and quaternion sides arises as a result of some generalisation of the Deuring correspondence. Classically, this establishes an equivalence of categories between supersingular elliptic curves mod $p$ under isogenies, and invertible left $\mathcal O$-ideals under nonzero left $\mathcal O$-module homomorphisms (for a maximal order $\mathcal O$ of the quaternion algebra $D$ ramified at $\{p,\infty\}$). See Theorem 42.3.2 of \cite{Voi1} for a reference. Then, if a supersingular elliptic curve $E$ corresponds to an invertible left $\mathcal O$-ideal $I$, we see that $O_R(I)^\times \cong \mathrm{Aut}(E)$. But the automorphism group of an elliptic curve is well understood. As in Theorem III.10.1 of \cite{Sil1}, the automorphism group of $E$ has order dividing 24, and if $j(E) \neq 0,1728$, then $\mathrm{Aut}(E)$ has order 2 and is just given by $\pm 1$.
\end{rem}

We have seen that in listing the elements of $\Omega(N)$, we need to identify, for example, $v^{\vec{j}}$ and $v^{\vec{j}'}$ when $\vec{j} = (j,\mu,\gamma)$ and $\vec{j}' = (j,-\mu,-\gamma)$, because $-1 \in O_R(I_j)^\times$. In general, if $\zeta \in O_R(I_j)^\times$, we know 
\begin{align*}
[w_2^j, w_3^j, \dots, \mu \cdot w_p^j, \dots, \gamma_\ell \cdot w_\ell^j , \dots] &= [w_2^j \zeta, w_3^j \zeta, \dots, \mu \cdot w_p^j \zeta , \dots, \gamma_\ell \cdot w_\ell^j \zeta, \dots] \\
 &= [w_2^j, w_3^j, \dots, \mu\phi_j(\zeta) \cdot w_p^j, \dots, \gamma_\ell \psi_j(\zeta;\ell) \cdot w_\ell^j , \dots]\end{align*}
for $\phi_j(\zeta)$ and $\psi_j(\zeta;\ell)$ as defined in the Updated Table of Notation. To define this, we use the fact that because $I_j \zeta = I_j$, we have for any $\ell$, $\mathcal O_\ell  w_\ell^j \zeta= \mathcal O_\ell w_\ell^j $, and so $w_\ell^j \zeta (w_\ell^j)^{-1} \in \mathcal O_\ell^\times$. Let $\psi_j(\zeta) \in \mathrm{GL}_2(\mathbb Z/N\mathbb Z)$ be the matrix congruent to $\psi_j(\zeta;\ell)$ mod $\ell^{v_\ell(N)}$ for each $\ell \mid N$.

From this, we are motivated to define the set $$\mathcal S = \left(\{1,\dots,h\} \times \mathbb F_{p^2}^\times \times \mathrm{GL}_2(\mathbb Z/N\mathbb Z) \right) / \sim,$$ where we identify ${(j,\mu,\gamma) \sim (j,\mu  \phi_j(\zeta), \gamma \psi_j(\zeta))}$ for any $\zeta \in O_R(I_j)^\times$. Then we can enumerate the elements of $\Omega(N)$ as $\{v^{\vec{j}} \mid \vec{j} \in \mathcal S\}$. Note that $v^{\vec{j}}$ reduces to $w^j$ when we quotient by $U$ in $\Omega = U \backslash \Omega(N)$. Once again, an obvious choice of basis for functions $\Omega(N) \to \overline{\mathbb F}_p$ is given by the characteristic functions $\mathbbm{1}_{v^{\vec{j}}}$. One is then led to computing the following quantity: $$e_{\vec{i},\vec{j},k} = \mathbbm{1}_{v^{\vec{i}} }(g_k \cdot v^{\vec{j}} ) \in \{0,1\}$$ for $\vec{i},\vec{j} \in \mathcal S$. Then we have the formula 
$$T_{\ell_0  }(\mathbbm{1}_{v^{\vec{i}}})(v^{\vec{j}}) = \frac{1}{\ell_0  } \cdot \sum\limits_{k=0}^{\ell_0  } e_{\vec{i},\vec{j},k}.$$
If we index the rows and columns of the matrix for $T_{\ell_0  }$, with respect to this basis, by $\vec{i} \in \mathcal S$, then the $(\vec{j},\vec{i})$-th entry is this value above. We observe that $e_{\vec{i},\vec{j},k}=0$ if $e_{i,j,k}=0$, for $e_{i,j,k}$ as defined in step \ref{step2} in Algorithm \ref{algo}. This is because if $v^{\vec{i}} = g_k \cdot v^{\vec{j}} \in \Omega(N)$, then they generate left $\mathcal O$-ideals with $\pi N$-structure that are isomorphic; in particular the ideals are isomorphic, and so $w^i = g_k \cdot w^j$. One can think of this as replacing $e_{i,j,k}$ in Algorithm \ref{algo} with a permutation matrix, depending on $i,j,k$, keeping track of the $\pi N$-structure.

So now we see how to compute $e_{\vec{i},\vec{j},k}$. It is 0 if $e_{i,j,k}=0$. If $e_{i,j,k}=1$, then from Algorithm \ref{algo} we compute some $\alpha \in D^\times (\mathbb Q)$ such that for all $\ell \neq \ell_0  $,
\begin{equation*}w_\ell ^i \alpha (w_\ell ^j)^{-1} \in \mathcal O_\ell^\times,\end{equation*}
and also
\begin{equation*}w_{\ell_0  }^i \alpha (w_{\ell_0  }^j)^{-1} g_k^{-1} \in \mathcal O_{\ell_0}^\times.\end{equation*}
Pick representatives $(i,\mu,\gamma)$ and $(j,\mu',\gamma')$ for $\vec{i}, \vec{j}$. What we need to check now is whether we also have 
\begin{equation}\label{maineq3}
(\mu \phi_i(\zeta_i)) \cdot w_p^i \cdot \alpha \cdot (w_p^j)^{-1} \cdot (\mu' \phi_j(\zeta_j))^{-1} \in \mathcal O_p^\times(1)
\end{equation}
and for $\ell \mid N$
\begin{equation}\label{maineq4}
(\gamma_\ell \psi_i(\zeta_i;\ell)) \cdot w_\ell ^i \cdot \alpha \cdot (w_\ell ^j)^{-1} \cdot (\gamma'_\ell \psi_j(\zeta_j ; \ell))^{-1} \in \mathcal O_\ell ^\times (N)
\end{equation}
for some $\zeta_i \in O_R(I_i)^\times$ and $\zeta_j \in O_R(I_j)^\times$. In other words, we need to compute $w_p^i \cdot \alpha \cdot (w_p^j)^{-1}$, which we know is in $\mathcal O_p^\times$ by definition of $\alpha$, and then check if this reduces to $(\mu \phi_i(\zeta_i))^{-1}(\mu' \phi_j(\zeta_j)) \in \mathbb F_{p^2}^\times$ modulo the uniformiser $\pi$, for some $\zeta_i \in O_R(I_i)^\times$ and $\zeta_j \in O_R(I_j)^\times$. We also need to compute for each $\ell \mid N$ the terms $w_\ell ^i \cdot \alpha \cdot (w_\ell ^j)^{-1}$, which we know are in $\mathcal O_\ell ^\times$, and then check whether they reduce to $(\gamma_\ell \psi_i(\zeta_i;\ell))^{-1}(\gamma'_\ell \psi_j(\zeta_j ; \ell)) \in \mathrm{GL}_2(\mathbb Z/\ell^{v_\ell (N)} \mathbb Z)$ modulo $\ell^{v_\ell (N)}$, for the same $\zeta_i \in O_R(I_i)^\times$ and $\zeta_j \in O_R(I_j)^\times$. To make sense of this, we need to write down matrices at the primes dividing $N$ as well, so we extend our set $\mathcal V$ in the Updated Table of Notation (changes from the previous table in bold).

We are now ready to write out the algorithm for the general case.

\newpage

\begin{center}
\textbf{Updated Table of Notation}
\end{center}
\begin{center}

\binoppenalty=9999
\relpenalty = 9999
	\begin{tabular}{|V{18em} |V{20em}|V{9.5em}|}
	\hline
\textbf{Notation} & \textbf{Definition} & \textbf{Corresponding Matrices} \\ \hline

$D = \left(\frac{-\epsilon,-p}{\mathbb Q}\right)$ & Quaternion algebra ramified at $\{p,\infty\}$. & \\ \hline
$\mathcal O$ & A maximal order of $D$. & \\ \hline
$I_1,\dots, I_h$ & Representatives of the left ideal classes of $\mathcal O$. & \\ \hline
$\mathcal B_1,\dots, \mathcal B_h$ & $\mathbb Z$-bases for $I_1,\dots,I_h$. & \\ \hline
$O_R(I_1)^\times, \dots, O_R(I_h)^\times$ & The units of the right orders of $I_1,\dots, I_h$. & \\ \hline
$\mathcal V$ & The set of primes $\ell$ for which in some $\mathcal B_j$ all elements have reduced norm divisible by $\ell$,  excluding $p$, and including $\ell_0  $ \textbf{and all primes dividing $N$}. These are the primes at which we need to compute matrices. & \\ \hline
$i,j$ & & $A_\ell ,B_\ell  \in \mathrm{M}_2(\mathbb Z/\ell^{n_\ell }\mathbb Z)$ for each $\ell \in \mathcal V$.  \\ \hline
$\{s^1,s^2,s^3,s^4\}$ & & $S^1_\ell ,S_\ell ^2,S_\ell ^3,S_\ell ^4 \in \mathrm{M}_2(\mathbb Z/\ell^{2m_\ell \mathbf{+v_\ell (N)}+2} \mathbb Z)$ for each $\ell \in \mathcal V$.  \\ \hline
$w^j=[w_2^j,w_3^j,w_5^j, \dots] \in \Omega$ & The elements of $\Omega$ corresponding to the $I_j$. The square brackets means the double coset represented by the adelic point $(w_2^j,w_3^j,\dots)$. From the way we compute this, almost all $w_\ell ^j$ will be 1.  & $W_\ell ^j \in \mathrm{M}_2(\mathbb Z/ \ell^{2m_\ell \mathbf{+v_\ell (N)}+2} \mathbb Z)$ corresponding to the $w_\ell ^j$ for each $\ell \in \mathcal V$.  \\ \hline
$v_\ell (N)$ & The $\ell$-adic valuation of $N$. & \\ \hline
$m_\ell $, $n_\ell =2m_\ell \mathbf{+ v_\ell (N)}+4$ for $\ell \neq 2$, and $n_2 = \max(7,2m_2 +\mathbf{v_2(N)}+ 4)$ &  $m_\ell := \max\limits_j(v_\ell(\mathrm{nrd}(w_\ell^j)))$. & \\ \hline
$g_0,\dots,g_{\ell_0  } \in \mathrm{M}_2(\mathbb Z_{\ell_0  })$ & The matrices  $\begin{psmallmatrix} 1 & 0 \\ 0 & \ell_0   \end{psmallmatrix}, \begin{psmallmatrix} 1 & 1 \\ 0 & \ell_0   \end{psmallmatrix}, \dots, $ $\begin{psmallmatrix} 1 & \ell_0  -1 \\ 0 & \ell_0   \end{psmallmatrix}, \begin{psmallmatrix} \ell_0   & 0 \\ 0 & 1 \end{psmallmatrix}$. & \\ \hline
$\mathcal O_p^\times / \mathcal O_p^\times (1) \cong \mathbb F_{p^2}^\times$ & Identified with ${\{s+ti \mid s,t \in \mathbb F_{p} \text{ not both zero}\}}$. & \\ \hline

	\end{tabular}
\end{center}

\newpage

\begin{center}

\binoppenalty=9999
\relpenalty = 9999
	\begin{tabular}{|V{18em} |V{20em}|V{9.5em}|}
	\hline
\textbf{Notation} & \textbf{Definition} & \textbf{Corresponding Matrices} \\ \hline
$\phi_j(\zeta) \in \mathbb F_{p^2}^\times$ & For $\zeta \in O_R(I_j)^\times$, let $\phi_j(\zeta)$ be the image of ${w_p^j \cdot \zeta \cdot (w_p^j)^{-1} \in \mathcal O_p^\times}$ in $\mathbb F_{p^2}^\times$. & \\ \hline
$\psi_j(\zeta;\ell) \in \mathrm{GL}_2(\mathbb Z/\ell^{v_\ell(N)}\mathbb Z)$ for $\ell \mid N$, \newline $\psi_j(\zeta) \in \mathrm{GL}_2(\mathbb Z/N\mathbb Z)$ & For $\zeta \in O_R(I_j)^\times$, consider ${w_\ell^j \cdot \zeta \cdot (w_\ell^j)^{-1} \in \mathcal O_\ell^\times \cap D}$ for all $\ell \mid N$. This lives in $\mathcal O$, so we can compute the corresponding matrix in $\mathrm{GL}_2(\mathbb Z/\ell^{n_\ell}\mathbb Z)$, and consider the reduction $\psi_j(\zeta;\ell) \in \mathrm{GL}_2(\mathbb Z/\ell^{v_\ell(N)}\mathbb Z)$. Let $\psi_j(\zeta) \in \mathrm{GL}_2(\mathbb Z/N\mathbb Z)$ be the matrix congruent to $\psi_j(\zeta;\ell)$ mod $\ell^{v_\ell(N)}$ for each $\ell \mid N$.  & \\ \hline
$\mathcal S = \left(\{1,\dots,h\} \times \mathbb F_{p^2}^\times \times \mathrm{GL}_2(\mathbb Z/N\mathbb Z) \right) / \sim$ & Here we identify ${(j,\mu,\gamma) \sim (j,\mu  \phi_j(\zeta), \gamma \psi_j(\zeta))}$ for any $\zeta \in O_R(I_j)^\times$.        &  \\ \hline

$v^{\vec{j}} = [v_2^{\vec{j}}, v_3^{\vec{j}}, \dots] \in \Omega(N)$ for $\vec{j} = [(j,\mu,\gamma)] \in \mathcal S$  & Here \begin{equation*}
v_\ell^{\vec{j}} =
\begin{dcases*}
w_\ell ^j & if $\ell \neq p \text{ and } \ell \nmid N$ \\
\mu \cdot w_p^j & if $ \ell=p$ \\
\gamma_\ell  \cdot w_\ell ^j & if $\ell \mid N$
\end{dcases*}
\end{equation*} for $\mu \in \mathbb F_{p^2}^\times $ and $\gamma \in \mathrm{GL}_2(\mathbb Z/ N \mathbb Z)$ with reduction $\gamma_\ell $ mod $\ell^{v_\ell (N)}$. To be precise, we should choose lifts of $\mu$ to $\mathcal O_p^\times$ and $\gamma_\ell $ to $\mathcal O_\ell ^\times \cong \mathrm{GL}_2(\mathbb Z_\ell )$. By construction, $v^{\vec{j}} \in \Omega(N)$ is well defined for any choice of representative $(j,\mu,\gamma)$ of $\vec{j} \in \mathcal S$.  & \\ \hline
$\mathbbm{1}_{v^{\vec{j}} }$ & The characteristic function $\Omega(N) \to \overline{\mathbb F}_p$ of the point $v^{\vec{j}}  \in \Omega(N)$. & \\ \hline
$e_{\vec{i},\vec{j},k} \in \{0,1\}$ & $e_{\vec{i},\vec{j},k} = \mathbbm{1}_{v^{\vec{i}} }(g_k \cdot v^{\vec{j}} )$ for $\vec{i},\vec{j} \in \mathcal S$.  & \\ \hline
$\mathcal V_{i,j} \subset \mathcal V$ & The set of primes $\ell$ such that at least one of $w_\ell ^i$ and $w_\ell ^j$ is not 1, excluding $p$ and $\ell_0  $, \textbf{and including all primes dividing $N$}. & \\ \hline
	\end{tabular}
\end{center}

\newpage

\begin{algo}\label{main algo}
Input: a prime $p$ and level $N$ coprime to $p$, together with a prime $\ell_0  $ coprime to $pN$.

Output: a matrix representing the action of the Hecke operator $T_{\ell_0  }$ on the space of all functions $\Omega(N) \to \overline{\mathbb F}_p$.
\begin{enumerate}[label=(\roman*)]
\item Define a quaternion algebra $D = \left(\frac{-\epsilon,-p}{\mathbb Q}\right)$ over $\mathbb Q$, ramified exactly at $\{p,\infty\}$. Define a maximal order $\mathcal O$ with integer basis given as in Proposition \ref{pizer basis}, for which we denote the basis elements $\{s^1,s^2,s^3,s^4\}$. Compute the left ideal classes $I_1, \dots, I_h$ of $\mathcal O$, and bases $\mathcal B_1, \dots, \mathcal B_h$ for them.

\item Compute the points $w^j = [w_2^j, w_3^j, w_5^j, \dots] \in \Omega$ corresponding to $I_j$ for each $j$ as follows. We take $w_\ell ^j$ to be any generator of $I_j \otimes \mathbb Z_\ell $ (which we know is principal). To do this, for our basis $\mathcal B_j$, compute the reduced norm of each of the four elements and set $w_\ell ^j$ to be any of these elements whose reduced norm has minimal $\ell$-adic valuation. Note that for almost all $\ell$ this valuation is zero, so we can instead take $w_\ell ^j=1$, and do so when possible.

\item Determine the set $\mathcal V$, defined in the Updated Table of Notation. For each $\ell \in \mathcal V \cup \{p\}$, compute $m_\ell$ and $n_\ell$. For each $\ell \in \mathcal V$, compute matrices $A_\ell, B_\ell \in \mathrm{M}_2(\mathbb Z/ \ell^{n_\ell} \mathbb Z)$ satisfying Condition \ref{hensel condition}. Using these, compute matrices corresponding to the $s^1,s^2,s^3,s^4$, which in any case are well defined modulo $\ell^{2m_\ell+v_\ell(N)+2}$. Denote these by $S_\ell^i \in \mathrm{M}_2(\mathbb Z/ \ell^{2m_\ell +v_\ell(N)+ 2} \mathbb Z)$ for each $\ell \in \mathcal V$. By expressing each $w_\ell^j \in \mathcal B_j$ as a $\mathbb Z$-linear combination of $s^1,s^2,s^3,s^4$, we can compute matrices $W_\ell ^j \in  \mathrm{M}_2(\mathbb Z/\ell^{2m_\ell +v_\ell(N)+2}\mathbb Z )$ corresponding to the $w_\ell ^j$.

\item Compute each $O_R(I_j)^\times$, and for each $\zeta \in O_R(I_j)^\times$, compute $\phi_j(\zeta) \in \mathbb F_{p^2}^\times$ and $\psi_j(\zeta;\ell) \in \mathrm{GL}_2(\mathbb Z/ \ell^{v_\ell(N)}\mathbb Z)$ for $\ell \mid N$. Determine representatives for $\mathcal S$. Fix $i,j$. We will compute $e_{\vec{i},\vec{j},k}$ for $0 \leq k \leq \ell$ and $\vec{i}=(i,\cdot,\cdot), \hspace{0.5em}\vec{j} = (j,\cdot,\cdot) \in \mathcal S$ among these representatives. Determine the set $\mathcal V_{i,j}$. Compute $$M = \ell_0   \prod\limits_{\ell \in \mathcal V_{i,j} \cup \{p,\ell_0\}} \ell^{m_\ell }$$ and $$K = \ell_0  \prod\limits_{\ell \in \mathcal V_{i,j} \cup \{p,\ell_0\}} \ell^{v_\ell (\mathrm{nrd}(w_\ell ^i)) - v_\ell (\mathrm{nrd}(w_\ell ^j))}.$$

\item We firstly compute $e_{i,j,k}$ as in Algorithm \ref{algo}. Check if there exist integers $t,x,y,z \in \mathbb Z$ such that the following conditions hold:
\begin{equation*}
\begin{dcases*}
\mathrm{nrd}(t\cdot s^1+x\cdot s^2 + y\cdot s^3 + z \cdot s^4) = KM^2 \\
W_\ell ^i \cdot \begin{pmatrix} a_\ell & b_\ell  \\ c_\ell  &-a_\ell \end{pmatrix} \cdot\mathrm{adj}(W_\ell ^j) \in \ell^{m_\ell  + v_\ell (\det(W_\ell ^j))} \cdot \mathrm{M}_2(\mathbb Z/ \ell^{2m_\ell +v_\ell(N)+2} \mathbb Z) \hspace{3em} \text{ for $\ell \in \mathcal V_{i,j}$} \\
W_{\ell_0  }^i \cdot \begin{pmatrix} a_{\ell_0  }& b_{\ell_0  } \\ c_{\ell_0  } &-a_{\ell_0  }\end{pmatrix} \cdot \mathrm{adj}(W_{\ell_0  }^j) \cdot \mathrm{adj}(g_k)
 \in \ell_0  ^{m_{\ell_0  } + v_{\ell_0  }(\det(W_{\ell_0  }^j))+2} \cdot \mathrm{M}_2(\mathbb Z/ \ell_0  ^{2m_{\ell_0} +2} \mathbb Z).
\end{dcases*}
\end{equation*}
where $\begin{psmallmatrix} a_\ell & b_\ell \\ c_\ell & -a_\ell \end{psmallmatrix} \in \mathrm{M}_2(\mathbb Z/ \ell  ^{2m_\ell +v_\ell(N)+2} \mathbb Z)$ is the matrix $t\cdot S_\ell^1+x\cdot S_\ell^2 + y\cdot S_\ell^3 + z \cdot S_\ell^4$.
If such $t,x,y,z$ exist, set $e_{i,j,k}$ to be 1, and otherwise 0. Note that the only dependence on $k$ is in the last condition.

\item If $e_{i,j,k}=0$, set $e_{\vec{i},\vec{j},k}=0$ for all $\vec{i}=(i,\cdot,\cdot)$, $\vec{j} = (j,\cdot,\cdot)$. Otherwise, taking our solution $(t,x,y,z)$ from the above step, compute the matrices
$$\overline{Q_\ell } := W_\ell ^i \cdot \frac{1}{M}\begin{pmatrix} a_\ell & b_\ell  \\ c_\ell  &-a_\ell \end{pmatrix} \cdot (W_\ell ^j)^{-1} \mod \ell^{v_\ell (N)}\in  \mathrm{GL}_2(\mathbb Z/ \ell^{v_\ell (N)} \mathbb Z)$$
for $\ell \mid N$, and 
$$Q_p := w_p^i \cdot \frac{1}{M}(t\cdot s^1+x\cdot s^2 + y \cdot s^3 + z \cdot s^4) \cdot (w_p^j)^{-1} \in \mathcal O_p^\times.$$
Writing $Q_p$ in terms of the generators $i$ and $j$ of the quaternion algebra $D$, let $\overline{Q_p} \in \mathbb F_{p^2}^\times$ be the reduction modulo $j$, where we view the elements of $\mathbb F_{p^2}^\times$ as $\{s+ti \mid s,t \in \mathbb F_p \text{ not both zero}\}$, which is a group under multiplication.

Then, for representatives $\vec{i}=(i,\mu,\gamma)$ and $\vec{j}=(j,\mu',\gamma')$ of $\mathcal S$, define

\begin{equation}
e_{\vec{i},\vec{j},k}=
\begin{dcases*}
1 & \parbox{38em}{if there exists $\zeta_i \in O_R(I_i)^\times$ and $\zeta_j \in O_R(I_j)^\times$ such that $(\mu \phi_i(\zeta_i))^{-1}(\mu' \phi_j(\zeta_j)) = \overline{Q_p} \in \mathbb F_{p^2}^\times$ and $(\gamma_\ell \psi_i(\zeta_i;\ell))^{-1}(\gamma'_\ell \psi_j(\zeta_j ; \ell)) = \overline{Q_\ell } \in \mathrm{GL}_2(\mathbb Z/ \ell^{v_\ell (N)}\mathbb Z)$ for all $\ell \mid N$} \\
0 & otherwise
\end{dcases*}
\end{equation}

\item If we index the rows and columns of the matrix for $T_{\ell_0  }$, with respect to the basis consisting of $\mathbbm{1}_{v^{\vec{i}}}$, by $\vec{i}\in \mathcal S$, then the $(\vec{j},\vec{i})$-th entry is $$ \frac{1}{\ell_0  } \cdot \sum\limits_{k=0}^{\ell_0  } e_{\vec{i},\vec{j},k}.$$ This gives us the matrix for $T_{\ell_0  }$.

\end{enumerate}
\end{algo}

\newpage
\bibliographystyle{amsalpha}
\bibliography{mybib}

\end{document}